\newcommand{\N}{\mathbb{N}}
\newcommand{\R}{\mathbb{R}}
\newcommand{\C}{\mathbb{C}}
\newtheorem{theorem}{Theorem}[section]
\newtheorem{lemma}[theorem]{Lemma}
\newtheorem{corollary}[theorem]{Corollary}
\newtheorem{remark}[theorem]{Remark}
\def\x{\mathrm{x}}
\def\y{\mathrm{y}}
\def\z{\mathrm{z}}
\DeclareMathOperator*{\argmin}{arg\,min}
\begin{document}
\title[An inverse boundary value problem for the $p$-Laplacian]{An inverse boundary value problem for the $p$-Laplacian}

\author{Antti Hannukainen}
\address{Aalto University, Department of Mathematics and Systems Analysis, P.O. Box 11100, FI-00076 Aalto, Finland} 
\email{antti.hannukainen@aalto.fi}

\author{Nuutti Hyv\"onen}
\address{Aalto University, Department of Mathematics and Systems Analysis, P.O. Box 11100, FI-00076 Aalto, Finland} 
\email{nuutti.hyvonen@aalto.fi}

\author{Lauri Mustonen}
\address{Emory University, Department of Mathematics and Computer Science, 400 Dowman Drive, Atlanta, GA 30322, USA} 
\email{lauri.mustonen@emory.edu}

\thanks{This work was supported by the Academy of Finland, the Aalto Science Institute and the Foundation for Aalto University Science and Technology.}

\subjclass[2010]{65N21, 35J60}

\keywords{$p$-Laplacian, inverse boundary value problem, linearization, Bayesian inversion}

\begin{abstract}
This work tackles an inverse boundary value problem for a $p$-Laplace type partial differential equation parametrized by a smoothening parameter $\tau \geq 0$. The aim is to numerically test reconstructing a conductivity type coefficient in the equation when Dirichlet boundary values of certain solutions to the corresponding Neumann problem serve as data. The numerical studies are based on a straightforward linearization of the forward map, and they demonstrate that the accuracy of such an approach depends nontrivially on $1 < p < \infty$ and the chosen parametrization for the unknown coefficient. The numerical considerations are complemented by proving that the forward operator, which maps a H\"older continuous conductivity coefficient to the solution of the Neumann problem, is Fr\'echet differentiable, excluding the degenerate case $\tau=0$ that corresponds to the classical (weighted) $p$-Laplace equation. 
\end{abstract}

\maketitle

\section{Introduction}
\label{sec:intro}

This work considers an inverse boundary value problem for the $p$-Laplace type partial differential equation ($1 < p < \infty$)
\begin{equation}
\label{eq:intro}
\nabla \cdot \big( \sigma ( \tau^2 + |\nabla u|^2)^{\frac{p-2}{2}}\nabla u\big) \, = \, 0 \qquad {\rm in} \ \Omega,
\end{equation}
where $\Omega$ is a bounded Lipschitz domain and $\tau \geq 0$ is a smoothening parameter, with $\tau=0$ corresponding to the so-called weighted $p$-Laplacian \cite{Heinonen93}. To be more precise, the main aim is to numerically test reconstructing the strictly positive coefficient $\sigma \in L^\infty(\Omega)$ using Neumann--Dirichlet boundary value pairs of solutions to \eqref{eq:intro} as data. A partial differential equation of the type \eqref{eq:intro} can allegedly model several (physical) phenomena such as nonlinear dielectrics, plastic moulding, electro-rheological and thermo-rheological fluids, fluids governed by a power law, viscous flows in glaciology, or plasticity, but we emphasize that the main motivation for this manuscript is simply studying the properties of \eqref{eq:intro} as a nonlinear model (inverse) boundary value problem without any particular practical application in mind. However, spurred by the case $p=2$ and $\tau=0$ corresponding to the conductivity equation, we somewhat misleadingly refer to $\sigma$  as conductivity, to $u$ as potential and to its conormal derivative on $\partial \Omega$ as boundary current density. 

Although the case $p=2$ essentially corresponds to the inverse conductivity problem~\cite{Borcea02,Cheney99,Uhlmann09}, i.e.~the most studied inverse elliptic boundary value problem both theoretically and computationally, for $p\not=2$ the identifiability or reconstruction of $\sigma$ in \eqref{eq:intro}  from boundary data has not yet received much attention in the mathematical inverse problems literature. There essentially only exist results on the unique identifiability of the boundary trace $\sigma|_{\partial \Omega}$ \cite{Salo12} together with its first derivatives \cite{Brander16} and on the differentiation between two conductivities satisfying $\sigma_1 \leq \sigma_2$ \cite{Guo16}. In addition, the theoretical basis for the generalization of certain inclusion detection methods originally designed for the inverse conductivity problem, namely the monotonicity~\cite{Harrach13} and enclosure~\cite{Ikehata00} methods, has been laid in \cite{Brander17,Brander15}. On the other hand, we are not aware of any previous numerical studies on reconstructing $\sigma$ based on boundary values of solutions to \eqref{eq:intro}, though numerically implementing the enclosure method for \eqref{eq:intro} seems viable; see~\cite{Brander17,Brander15}. In particular, no one has previously considered the straightforward approach of linearizing the dependence of the solutions to \eqref{eq:intro} on $\sigma$ and numerically solving the ensuing linear inverse problem. Take note that such a linearization comprises the basic building block for iterative Newton-type algorithms, which are the most commonly used reconstruction methods in practical applications associated to inverse elliptic boundary value problems; cf.,~e.g.,~\cite{Arridge99, Cheney99, Soleimani05}.

In this work, we consider the Neumann boundary value problem for \eqref{eq:intro} with certain boundary current densities and treat the Dirichlet traces of the associated potentials as the data for the inverse problem. As the theoretical foundation for our linearization approach, we prove that the solution  to the Neumann problem for \eqref{eq:intro}, say $u_\sigma$, is Fr\'echet differentiable with respect to a H\"older continuous $\sigma$ if $\tau > 0$ or $u_\sigma$ has no critical points in $\overline{\Omega}$. The associated Fr\'echet derivative is defined by a solution to a certain anisotropic (linear) conductivity equation with a homogeneous Neumann condition.

In our two-dimensional numerical studies, the accuracy of the linearization approach is tested on the forward map taking $\sigma$ to $u_\sigma|_{\partial \Omega}$ as well as in the computation of an (approximate) {\em maximum a posteriori} (MAP) estimate for the conductivity from boundary data. It turns out that the corresponding errors depend nontrivially on $p$ and the chosen parametrization for $\sigma$, i.e., whether the linearization is computed with respect to the conductivity itself $\sigma$, the resistivity $1/\sigma$, some other power of $\sigma$ or the log-conductivity $\log \sigma$; see \cite{Hyvonen18} for similar considerations when $p=2$. In particular, one definitely cannot draw the tempting conclusion that the inverse boundary value problem for \eqref{eq:intro} is least nonlinear in the case $p=2$ that corresponds to a linear partial differential equation. Moreover, although our proof of Fr\'echet differentiability does not cover the classical $p$-Laplace equation, i.e.~$\tau=0$, the conclusions of our numerical experiments do not seem to depend much on the choice of a small or {\em vanishing} $\tau \geq 0$ in \eqref{eq:intro}.

This text is organized as follows. Section~\ref{sec:setting} introduces the mathematical framework and considers H\"older continuity of solutions to \eqref{eq:intro} with respect to $\sigma$. The Fr\'echet differentiability result is proved in Section~\ref{sec:Frechet} and the numerical experiments are documented in Section~\ref{sec:numerics}. Finally, Section~\ref{sec:conclusion} presents the concluding remarks. Some useful inequalities are collected in Appendix~\ref{app:A}.

\section{The setting and preliminary continuity results}
\label{sec:setting}
In what follows, we will constantly employ the quotient Sobolev spaces $W^{1,p}(\Omega)/\R$ equipped with the norm
$$
\| \nabla v \|_{L^p(\Omega)} \leq \| v \|_{W^{1,p}(\Omega)/\R} := \inf_{c\in \R} \| v - c \|_{W^{1,p}(\Omega)} \leq C(p,\Omega) \| \nabla v \|_{L^p(\Omega)},
$$
where the last inequality is a straightforward consequence of the Poincar\'e inequality. Here and in what follows, $\Omega \subset \R^n$, $n \in \N \setminus \{ 1\}$, is a bounded Lipschitz domain. Moreover, $C$ and $c$ denote positive constants that may change between different occurrences. The multiplier field of all considered function spaces is $\R$.

We define a family of `smoothened $p$-energy functions' via
\begin{equation}
\label{eq:loc_energy}
\varphi_{p,\tau}(\x) = \frac{1}{p}\big(\tau^2 + |\x|^2\big)^{\frac{p}{2}}, \qquad \x \in \R^n, \ 1 < p < \infty, \ \tau \geq 0,
\end{equation}
and note that the corresponding gradient is
\begin{equation}
\label{eq:varphi}
D \varphi_{p,\tau}(\x) = (\tau^2 + |\x|^2)^{\frac{p-2}{2}} \x. 
\end{equation}
We use roman $\x$ in the argument of $\varphi_{p,\tau}$ to avoid confusion with the actual spatial variable $x \in \R^n$. Moreover, the gradient of $\varphi_{p,\tau}$ is denoted by $D\varphi_{p,\tau}$ in order to reserve the standard $\nabla$-notation for the spatial derivatives appearing in the considered partial differential equations. 
Appendix~\ref{app:A} provides more information on fundamental properties of $\varphi_{p,\tau}$. 

\subsection{Neumann problem and its stable solvability}
Let $\sigma \in L^\infty_+(\Omega)$ be a `conductivity' living in
$$
L^\infty_+(\Omega) = \{ \upsilon \in L^\infty(\Omega) \, : \, {\rm ess} \inf \upsilon > 0 \}.
$$
We consider a ($\sigma$-weighted) $p$-Laplace type equation with a Neumann boundary condition:
\begin{equation}
\label{eq:plaplace}
\left\{
\begin{array}{ll}
\nabla \cdot \big( \sigma  D\varphi_{p,\tau} (\nabla u) \big) = 0 \quad & {\rm in} \ \Omega,  \\[2mm]
\nu \cdot \sigma D\varphi_{p,\tau} (\nabla u) = f \quad & {\rm on} \ \partial \Omega,
\end{array}
\right.
\end{equation}
where $1 < p < \infty$ and $\nu \in L^\infty(\partial \Omega, \R^{n-1})$ is the exterior unit normal of $\partial \Omega$. In particular, the first line of \eqref{eq:plaplace} reduces to the standard $p$-Laplace equation if $\tau=0$ and $\sigma \equiv 1$. The weak formulation of \eqref{eq:plaplace} is to find $u_\sigma \in W^{1,p}(\Omega) / \R$ such that
\begin{equation}
\label{eq:varplaplace}
\int_\Omega \sigma D \varphi_{p,\tau}(\nabla u_\sigma) \cdot \nabla v \, {\rm d} x 
= \int_{\partial \Omega} f v \, {\rm d} S \qquad {\rm for} \ {\rm all} \ v \in W^{1,p}(\Omega)/\R.
\end{equation}
It is well known that \eqref{eq:varplaplace} has a unique solution for any `boundary current density' $f \in L^q_{\diamond}(\partial \Omega)$ and $\tau \geq 0$, with $L^q_{\diamond}(\partial \Omega)$ standing for the zero-mean subspace of $L^q(\partial \Omega)$ and $q := p/(p-1)$ being the conjugate index of $p$. Moreover, this solution uniquely minimizes the (weighted and smoothened) $p$-energy
\begin{equation}
\label{eq:energy}
\mathcal{E}(v) := \int_{\Omega} \sigma \varphi_{p,\tau}(\nabla v) \, {\rm d} x - \int_{\partial \Omega} f v \, {\rm d} S
\end{equation}
over $v \in W^{1,p}(\Omega) / \R$.
However, as the Neumann boundary condition is rarely considered in the literature on the $p$-Laplace equation, we summarize these results as a theorem accompanied by a sketch of a proof. In what follows, we denote by
$$
f_\sigma = \frac{1}{{\rm ess} \inf \sigma} \, f  \in L^q_\diamond(\partial \Omega)
$$
a scaled version of the boundary current density in \eqref{eq:plaplace}.

\begin{theorem}
\label{thm:alku}
The problem \eqref{eq:varplaplace} has a unique solution that satisfies
\begin{equation}
\label{eq:upperbound}
\|  u_\sigma \|_{W^{1,p}(\Omega)/\R} \leq 
C \left\{
\begin{array}{ll}
\| f_\sigma \|_{L^q(\partial \Omega)}^{\frac{q}{p}} + \tau^{\frac{q-p}{p/q+q/p}} \| f_\sigma \|_{L^q(\partial \Omega)}^{\frac{2}{p/q +q/p}} & \quad 1 < p \leq 2, \\[2mm]
\| f_\sigma \|_{L^q(\partial \Omega)}^{\frac{q}{p}} & \quad 2 \leq p < \infty,
\end{array}
\right.
\end{equation}
where $C = C(\Omega,p) > 0$ does not depend on $f \in L^q_\diamond(\partial \Omega)$, $\sigma \in L^\infty_+(\Omega)$ or $\tau \geq 0$. In addition,
\begin{equation}
\label{eq:nabq}
\| D \varphi_{p,\tau}(\nabla u_\sigma)\|_{L^q(\Omega)} \leq 
C \left\{
\begin{array}{ll}
\!\! \|f_\sigma\|_{L^q(\partial \Omega)} + \tau^{\frac{2-p}{p/q + q/p}}\| f_\sigma \|_{L^q(\partial \Omega)}^{\frac{p}{p/q +q/p}} & \quad {\rm if} \ 1 < p \leq 2, \\[2mm]
 \!\! \|f_\sigma\|_{L^q(\partial \Omega)}  + \tau^{p-2} \|f_\sigma \|_{L^q(\partial \Omega)}^{\frac{q}{p}}   & \quad {\rm if} \ 2 \leq p < \infty,
\end{array}
\right.
\end{equation}
where again $C = C(\Omega,p) > 0$ does not depend on $f \in L^q_\diamond(\partial \Omega)$, $\sigma \in L^\infty_+(\Omega)$ or $\tau \geq 0$.
\end{theorem}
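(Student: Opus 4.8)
The plan is to obtain existence, uniqueness and the energy-minimization property in one stroke by the direct method of the calculus of variations, and then to read off the a priori bounds \eqref{eq:upperbound} and \eqref{eq:nabq} by testing \eqref{eq:varplaplace} against the solution itself. Since $\tau^2+|\x|^2\geq|\x|^2$, the integrand obeys $\varphi_{p,\tau}(\x)\geq\frac1p|\x|^p$, so $\mathcal{E}$ in \eqref{eq:energy} is coercive on $W^{1,p}(\Omega)/\R$: by the quotient-norm equivalence and a trace estimate $|\int_{\partial\Omega}fv\dS|\leq C\|f\|_{L^q(\partial\Omega)}\|v\|_{W^{1,p}(\Omega)/\R}$ (well defined on the quotient because $f\in L^q_\diamond(\partial\Omega)$ has zero mean), the boundary term grows only linearly while the volume term grows like $\|\nabla v\|_{L^p(\Omega)}^p$ with $p>1$. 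Convexity of $\varphi_{p,\tau}$ (cf.\ Appendix~\ref{app:A}) makes $\mathcal{E}$ weakly lower semicontinuous, and reflexivity of $W^{1,p}(\Omega)/\R$ yields a minimizer whose Euler--Lagrange equation is exactly \eqref{eq:varplaplace}; strict convexity of $\varphi_{p,\tau}$, equivalently strict monotonicity of $D\varphi_{p,\tau}$, forces this minimizer to be unique.

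For the estimates, take $v=u_\sigma$ in \eqref{eq:varplaplace}, use $D\varphi_{p,\tau}(\x)\cdot\x=(\tau^2+|\x|^2)^{(p-2)/2}|\x|^2$ and $\sigma\geq{\rm ess}\inf\sigma$, and divide by ${\rm ess}\inf\sigma$ to get
\[
\int_\Omega(\tau^2+|\nabla u_\sigma|^2)^{\frac{p-2}{2}}|\nabla u_\sigma|^2\dx
\leq C\|f_\sigma\|_{L^q(\partial\Omega)}\|u_\sigma\|_{W^{1,p}(\Omega)/\R}.
\]
When $2\leq p<\infty$ the weight satisfies $(\tau^2+|\x|^2)^{(p-2)/2}\geq|\x|^{p-2}$, so the left-hand side dominates $\|\nabla u_\sigma\|_{L^p(\Omega)}^p$; combined with the quotient-norm equivalence this gives $\|\nabla u_\sigma\|_{L^p(\Omega)}^{p-1}\leq C\|f_\sigma\|_{L^q(\partial\Omega)}$, which is \eqref{eq:upperbound} after recalling $1/(p-1)=q/p$. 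Then \eqref{eq:nabq} follows from the pointwise bound $|D\varphi_{p,\tau}(\x)|\leq C(\tau^{p-2}|\x|+|\x|^{p-1})$ by taking $L^q$ norms and invoking $(p-1)q=p$ and $\|\nabla u_\sigma\|_{L^q(\Omega)}\leq C\|\nabla u_\sigma\|_{L^p(\Omega)}$.

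The delicate case is $1<p<2$, where the weight degenerates so that $(\tau^2+|\x|^2)^{(p-2)/2}|\x|^2\leq|\x|^p$, i.e.\ the coercivity inequality points the wrong way. I would interpolate: factoring $|\x|^p=[(\tau^2+|\x|^2)^{(p-2)/2}|\x|^2]^{p/2}(\tau^2+|\x|^2)^{p(2-p)/4}$ and applying H\"older with exponents $2/p$ and $2/(2-p)$ bounds $\|\nabla u_\sigma\|_{L^p(\Omega)}^p$ by the controlled coercivity integral raised to $p/2$ times $(\int_\Omega(\tau^2+|\nabla u_\sigma|^2)^{p/2}\dx)^{(2-p)/2}$. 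Estimating $\int_\Omega(\tau^2+|\nabla u_\sigma|^2)^{p/2}\dx\leq C(\tau^p+\|\nabla u_\sigma\|_{L^p(\Omega)}^p)$ and inserting the test-function bound reduces everything to an algebraic inequality of the shape $X\leq CF^p(\tau^p+X)^{2-p}$, where $X=\|\nabla u_\sigma\|_{L^p(\Omega)}^p$ and $F=\|f_\sigma\|_{L^q(\partial\Omega)}$; solving it by a Young-type splitting that balances the two regimes $X\gtrless\tau^p$ produces the two summands of \eqref{eq:upperbound}, and an analogous computation yields \eqref{eq:nabq}. I expect the main obstacle to sit precisely here: for $1<p<2$ the bound is not a one-line consequence of the test identity, and recovering the exact exponents $\frac{q-p}{p/q+q/p}$ and $\frac{2}{p/q+q/p}$ demands tracking the interpolation parameters and the $X\sim\tau^p$ balance sharply, since a crude case-split delivers correct but non-optimal powers of $\tau$. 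Uniformity of all constants in $\sigma$ and $\tau$ is then automatic, because $\sigma$ enters only through the factor ${\rm ess}\inf\sigma$ already divided out and every inequality employed holds uniformly in $\tau\geq0$.
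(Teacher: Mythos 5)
Your existence/uniqueness argument and your treatment of $2\le p<\infty$ coincide with the paper's: direct method plus strict convexity, testing \eqref{eq:varplaplace} with $v=u_\sigma$, the bound $D\varphi_{p,\tau}(\x)\cdot\x\ge|\x|^p$ for $p\ge2$, and \eqref{eq:isop} for \eqref{eq:nabq}. For $1<p<2$ your route to \eqref{eq:upperbound} is genuinely different from the paper's: you interpolate $|\x|^p$ against the coercivity density with H\"older exponents $2/p$ and $2/(2-p)$, reaching $X\le CF^p(\tau^p+X)^{2-p}$ with $X=\|\nabla u_\sigma\|_{L^p(\Omega)}^p$ and $F=\|f_\sigma\|_{L^q(\partial\Omega)}$, whereas the paper uses the pointwise inequality \eqref{eq:pienip2} and H\"older with exponents $1/(2-p)$, $1/(p-1)$, routing through $\|D\varphi_{p,\tau}(\nabla u_\sigma)\|_{L^1(\Omega)}$ and \eqref{eq:pp2lm} to get \eqref{eq:pp2ml2}. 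Your closure does work: the dichotomy $X\gtrless\tau^p$ gives the summands $F^{q/p}$ and $F\tau^{2-p}$, and although the latter is not the stated second summand, it implies it (if $F\ge\tau^{p-1}$ then $F\tau^{2-p}\le F^{q/p}$; if $F\le\tau^{p-1}$ then $F\tau^{2-p}\le\tau^{\theta}(F\tau^{2-p})^{1-\theta}$ for any $\theta\in[0,1]$), and retaining the side constraint $X\le\tau^p$ in the second regime and taking the geometric mean with $\theta=(p-2)^2/\bigl((p-1)^2+1\bigr)$ reproduces exactly the stated exponents, confirming the caveat you raise resolves favorably.

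The genuine gap is the final claim that ``an analogous computation yields \eqref{eq:nabq}'' for $1<p<2$. The analogy with your $p\ge2$ computation fails there: the natural pointwise bound for $p<2$ is $|D\varphi_{p,\tau}(\x)|\le|\x|^{p-1}$, which gives $\|D\varphi_{p,\tau}(\nabla u_\sigma)\|_{L^q(\Omega)}\le\|\nabla u_\sigma\|_{L^p(\Omega)}^{p-1}$, and inserting \eqref{eq:upperbound} then produces a second summand with $F$-exponent $2(p-1)^2/\bigl((p-1)^2+1\bigr)$, strictly smaller than the required $p(p-1)/\bigl((p-1)^2+1\bigr)$ since $2(p-1)<p$ for $p<2$; that bound is strictly weaker than \eqref{eq:nabq} (e.g.\ $p=3/2$, $\tau=1$: it gives $F^{2/5}$ versus the required $F^{3/5}$ as $F\to0$) and cannot prove it. The correct step, which is the one the paper takes, is the pointwise inequality $|D\varphi_{p,\tau}(\x)|^q\le D\varphi_{p,\tau}(\x)\cdot\x$ for $1<p\le2$ (this is \eqref{eq:pienip}), which converts your test-function identity into $\|D\varphi_{p,\tau}(\nabla u_\sigma)\|_{L^q(\Omega)}^q\le CF\|u_\sigma\|_{W^{1,p}(\Omega)/\R}$, i.e.\ \eqref{eq:pp2lm}; inserting \eqref{eq:upperbound} and using $1/p+1/q=1$ then gives precisely \eqref{eq:nabq}. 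Since you already control the quantity $\int_\Omega D\varphi_{p,\tau}(\nabla u_\sigma)\cdot\nabla u_\sigma\dx$, the repair is one line, but as written this step is missing and the most natural reading of ``analogous'' leads to a false start.
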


\begin{proof}
First of all, it is unambiguous to define \eqref{eq:varplaplace} and \eqref{eq:energy} on a quotient space since the gradient does not see an additive constant and $f$ has zero mean. Moreover, all integrals in \eqref{eq:varplaplace} and \eqref{eq:energy} are finite for all elements of the associated function spaces due to H\"older's inequality, \eqref{eq:welldef} and the trace theorem.

Since $\varphi_{p,\tau}:\R^n \to \R$ is strictly convex (see Appendix~\ref{app:A}), the fact that \eqref{eq:energy} has a unique minimizer in $W^{1,p}(\Omega) / \R$ and that this minimizer also uniquely solves \eqref{eq:varplaplace} follows by the same logic as in the case of a Dirichlet boundary condition; see,~e.g.,~\cite[Proposition~A.1]{Salo12} for the case $\tau = 0$. Indeed, compared to the proof of \cite[Proposition~A.1]{Salo12}, one essentially only needs to use \eqref{eq:convex} whenever \cite{Salo12} resorts to \cite[(A.5)]{Salo12} and to note that the (bounded) trace map retains weak convergence of a minimizing sequence. See also \cite{Maly16}.

In the rest of this proof the generic constant $C>0$ depends only on $\Omega$ and $p$.
Let us consider $2\leq p < \infty$. To deduce \eqref{eq:upperbound}, choose $v=u_\sigma$ in \eqref{eq:varplaplace} and apply H\"older's inequality, which leads to
\begin{align}
\label{eq:upper}
{\rm ess} \inf \sigma \, \| u_\sigma \|_{W^{1,p}(\Omega)/\R}^p 
&\leq C \int_{\Omega} \sigma D\varphi_{p,\tau}(\nabla u_\sigma) \cdot \nabla u_\sigma  \, {\rm d} x   = C \int_{\partial \Omega} f u_\sigma  \, {\rm d} S  \nonumber \\[1mm]
&\leq C \| f \|_{L^q(\partial \Omega)} \| u_\sigma \|_{L^p(\partial \Omega)/\R} \leq  C \| f \|_{L^q(\partial \Omega)} \| u_\sigma \|_{W^{1,p}(\Omega)/\R},
\end{align}
where the last step is an easy consequence of the trace theorem (cf.~\cite[Lemma~2.7]{Hyvonen04}). Dividing by ${\rm ess} \inf \sigma \, \| u_\sigma \|_{W^{1,p}(\Omega)/\R}$ and taking the $(p-1)$th root proves \eqref{eq:upperbound}. On the other hand, a direct estimation based on \eqref{eq:isop}, the triangle inequality and the continuity of the embedding $L^q(\Omega) \hookrightarrow L^p(\Omega)$ yields
\begin{align*}
\| D\varphi_{p,\tau}(\nabla u_\sigma) \|_{L^q(\Omega)} &\leq C \Big(
\tau^{p-2} \| \nabla u_\sigma \|_{L^q(\Omega)} + \big\| \, |\nabla u_\sigma|^{p-1} \big\|_{L^q(\Omega)} \Big) \\[1mm]
&\leq C \Big(
\tau^{p-2} \| \nabla u_\sigma \|_{L^p(\Omega)} + \big\| \nabla u_\sigma \big\|_{L^p(\Omega)}^{p-1} \Big).
\end{align*}
Substituting \eqref{eq:upperbound} in this estimate completes the proof for $2 \leq p < \infty$.

It remains to prove \eqref{eq:upperbound} and \eqref{eq:nabq} for $1 < p < 2$. First of all, notice that the estimate \eqref{eq:upper} still holds apart from its first inequality.  In particular, \eqref{eq:pienip} indicates that
\begin{align}
\label{eq:pp2lm}
{\rm ess} \inf \sigma \, \| D\varphi_{p,\tau}(\nabla u_\sigma) \|_{L^q(\Omega)}^{q} &\leq   \int_{\Omega} \sigma D\varphi_{p,\tau}(\nabla u_\sigma) \cdot \nabla u_\sigma \, {\rm d} x \nonumber \\[1mm]
&\leq  C \| f \|_{L^q(\partial \Omega)} \| u_\sigma \|_{W^{1,p}(\Omega)/\R}.
\end{align}
On the other hand, by virtue of \eqref{eq:pienip2},
\begin{align}
\label{eq:pp2p}
\| u_\sigma \|_{W^{1,p}(\Omega)/\R}^p &\leq C \Big( \int_{\Omega} D\varphi_{p,\tau}(\nabla u_\sigma) \cdot \nabla u_\sigma \, {\rm d} x \nonumber \\ 
&\qquad \qquad \qquad + \tau^{2-p}
\int_{\Omega} (\tau^2 + |\nabla u_{\sigma} |^2)^{\frac{p-2}{2}} |\nabla u_{\sigma}|^p \, {\rm d} x \Big).
\end{align} 
The second term on the right-hand side of \eqref{eq:pp2p} can be estimated as follows:
\begin{align*}
\int_{\Omega} &\frac{|\nabla u_{\sigma}|^p}{(\tau^2 + |\nabla u_{\sigma} |^2)^{\frac{2-p}{2}}} \, {\rm d} x = \int_{\Omega} \frac{|\nabla u_{\sigma} |^{(2-p) + (2p-2)}}{(\tau^2 + |\nabla u_{\sigma} |^2)^{\frac{(2-p)((2-p) + (p-1))}{2}}}  \, {\rm d} x \\[2mm]
& \qquad \leq \bigg( \int_{\Omega} \frac{|\nabla u_{\sigma} |}{(\tau^2 + |\nabla u_{\sigma} |^2)^{\frac{2-p}{2}}} \, {\rm d} x \bigg)^{2-p} \bigg(\int_{\Omega} \frac{|\nabla u_{\sigma} |^2}{(\tau^2 + |\nabla u_{\sigma} |^2)^{\frac{2-p}{2}}} \, {\rm d} x\bigg)^{p-1} \\[1mm]
& \qquad \leq C \| D\varphi_{p,\tau}(\nabla u_\sigma) \|_{L^1(\Omega)}^{2-p} \Big(\int_{\Omega} D\varphi_{p,\tau}(\nabla u_\sigma) \cdot \nabla u_\sigma \, {\rm d} x \Big)^{p-1},
\end{align*}
where the penultimate step is H\"older's inequality with the conjugate exponents $r = 1/(2-p)$ and $r' = 1/(p-1)$. With the help of \eqref{eq:pp2lm} and the continuous embedding $L^1(\Omega) \hookrightarrow L^q(\Omega)$, the estimate \eqref{eq:pp2p} thus leads to 
\begin{align}
\label{eq:pp2ml2}
\| u_\sigma \|_{W^{1,p}(\Omega)/\R}^p &\leq 
C \Big(\| f_\sigma \|_{L^q(\partial \Omega)} \|u_\sigma\|_{W^{1,p}(\Omega)/\R} \nonumber \\[0mm]
& \qquad \quad + \tau^{2-p} \big( \| f_\sigma \|_{L^q(\partial \Omega)} \|u_\sigma\|_{W^{1,p}(\Omega)/\R}\big)^{\frac{2(p-1)}{p}} \Big).
\end{align}
As $\| u_\sigma \|_{W^{1,p}(\Omega)/\R}^p$ must be smaller than two times the larger term on the right-hand side of \eqref{eq:pp2ml2}, this proves \eqref{eq:upperbound} for $1 < p \leq 2$ after dividing by the appropriate power of $\| u_\sigma \|_{W^{1,p}(\Omega)/\R}$ and algebraically manipulating the exponents. Finally, plugging \eqref{eq:upperbound} in \eqref{eq:pp2lm} straightforwardly validates \eqref{eq:nabq} and completes the proof.
\end{proof}
 
Notice that for $\tau = 0$ or $p=2$, the two upper bounds both in \eqref{eq:upperbound} and  in \eqref{eq:nabq} coincide, which is in line with the theory for the standard $p$-Laplace equation and for linear elliptic equations, respectively. In addition, it is easy to check that on the right-hand sides of both \eqref{eq:upperbound} and \eqref{eq:nabq} the term depending on $\tau$ dominates the other summand for any fixed $\tau > 0$ when $\|f\|_{L^q(\partial \Omega)} \to 0$ and their roles are reversed when $\|f\|_{L^q(\partial \Omega)} \to \infty$.

\subsection{Complementary perturbation estimate}
One can straightforwardly deduce Lipschitz and H\"older continuity of the forward map $L^\infty_+(\Omega) \ni \sigma \mapsto u_\sigma \in W^{1,p}(\Omega)/\R$ for $1 < p \leq 2$ and $2 < p < \infty$, respectively. Although this assertion is proved for a more general partial differential equation, $\tau=0$ and a Dirichlet boundary condition in \cite[Lemma~3.2]{Guo16}, we anyway formulate it as a lemma and present a brief proof for the sake of completeness, including a rather explicit dependence on $f$ and $\tau$ in the process. In the following, we denote the functions defined by the right hand sides of \eqref{eq:upperbound} and \eqref{eq:nabq} by $d(f_\sigma, p, \tau)$ and $\tilde{d}(f_\sigma, p, \tau)$, respectively, that is, $d(f_\sigma, p, \tau)$ provides an upper bound for $\| u_\sigma \|_{W^{1,p}(\Omega) / \R}$ and $\tilde{d}(f_\sigma, p, \tau)$ for $\| D \varphi_{p, \tau}(\nabla u_\sigma) \|_{L^q(\Omega)}$.

\begin{lemma}
\label{lemma:perus}
Let $u_{\sigma_0}, u_{\sigma_1} \in W^{1,p}(\Omega)/\R$ be the solutions of \eqref{eq:varplaplace} corresponding to $\sigma_0, \sigma_1 \in L^\infty_+(\Omega)$, respectively. Then, it holds that
\begin{equation}
\label{eq:conv}
\| \nabla u_{\sigma_1} - \nabla u_{\sigma_0} \|_{L^p(\Omega)} \leq C' \| \sigma_1 - \sigma_0 \|_{L^\infty(\Omega)}^{\min\{1,\frac{q}{p}\}}.
\end{equation}
The constant $C' >0$ admits a representation
\begin{equation}
\label{eq:Cprime}
C' = C \left\{ 
\begin{array}{ll}
 \underline{\sigma}^{-1} \Big( \tau^{2-p} + d\big(f_{\underline{\sigma}}, \tau, p\big)^{2-p}\Big) \, \tilde{d}\big(f_{\underline{\sigma}}, \tau, p\big) & \quad {\rm if} \ 1 < p \leq 2, \\[3mm]
\underline{\sigma}^{-\frac{q}{p}} \, \tilde{d} \big(f_{\underline{\sigma}}, \tau, p\big)^{\frac{q}{p}} & \quad {\rm if} \ 2 \leq p < \infty,
\end{array}
\right.
\end{equation}
where $\underline{\sigma} = \min_{j=0,1} \{{\rm ess} \inf \sigma_j \}$ and $C = C(\Omega, p)>0$ is independent of $\sigma_0$, $\sigma_1$, $f$ and $\tau\geq 0$. 
\end{lemma}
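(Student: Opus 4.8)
The plan is to test the difference of the two weak formulations \eqref{eq:varplaplace} against the solution difference and then exploit the monotonicity of $D\varphi_{p,\tau}$. Write $w := u_{\sigma_1} - u_{\sigma_0}$. Since both solutions share the same boundary current density $f$, subtracting the identities \eqref{eq:varplaplace} for $\sigma_0$ and $\sigma_1$ and choosing $v = w$ cancels the boundary term and, after adding and subtracting $\sigma_1 D\varphi_{p,\tau}(\nabla u_{\sigma_0})$, yields
\begin{equation*}
\int_\Omega \sigma_1 \big( D\varphi_{p,\tau}(\nabla u_{\sigma_1}) - D\varphi_{p,\tau}(\nabla u_{\sigma_0}) \big)\cdot \nabla w \dx = -\int_\Omega (\sigma_1 - \sigma_0)\, D\varphi_{p,\tau}(\nabla u_{\sigma_0})\cdot \nabla w \dx .
\end{equation*}
The right-hand side is controlled by H\"older's inequality through $\|\sigma_1-\sigma_0\|_{L^\infty(\Omega)}$, $\|D\varphi_{p,\tau}(\nabla u_{\sigma_0})\|_{L^q(\Omega)}$ and $\|\nabla w\|_{L^p(\Omega)}$; since $\underline\sigma \leq {\rm ess}\inf\sigma_0$ gives $\|f_{\sigma_0}\|_{L^q(\partial\Omega)} \leq \|f_{\underline\sigma}\|_{L^q(\partial\Omega)}$ and $\tilde{d}$ is increasing in its first argument, Theorem~\ref{thm:alku} bounds the middle factor by $\tilde{d}(f_{\underline\sigma},p,\tau)$. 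The left-hand side is estimated from below after replacing $\sigma_1$ by $\underline\sigma$, which reduces the proof to the pointwise monotonicity inequalities for $D\varphi_{p,\tau}$ recorded in Appendix~\ref{app:A}.

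For $2 \leq p < \infty$ the conclusion is then immediate: the strong monotonicity estimate $(D\varphi_{p,\tau}(a) - D\varphi_{p,\tau}(b))\cdot(a-b) \geq c\,|a-b|^p$ bounds the left-hand side below by $c\,\underline\sigma\,\|\nabla w\|_{L^p(\Omega)}^p$. Combining with the H\"older bound, dividing by $\|\nabla w\|_{L^p(\Omega)}$ and raising to the power $1/(p-1) = q/p$ produces \eqref{eq:conv} with the second branch of \eqref{eq:Cprime}, noting that $\min\{1,q/p\} = q/p$ in this range.

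The delicate case is $1 < p < 2$, where $D\varphi_{p,\tau}$ only satisfies the degenerate monotonicity inequality $(D\varphi_{p,\tau}(a) - D\varphi_{p,\tau}(b))\cdot(a-b) \geq c\,(\tau^2+|a|^2+|b|^2)^{\frac{p-2}{2}}|a-b|^2$, which does not directly control $\|\nabla w\|_{L^p(\Omega)}^p$. I would recover the $L^p$ norm by a H\"older interpolation with the conjugate exponents $2/p$ and $2/(2-p)$: writing the integrand $|\nabla w|^p$ as the product of $\big((\tau^2+|\nabla u_{\sigma_1}|^2+|\nabla u_{\sigma_0}|^2)^{\frac{p-2}{2}}|\nabla w|^2\big)^{p/2}$ and $\big((\tau^2+|\nabla u_{\sigma_1}|^2+|\nabla u_{\sigma_0}|^2)^{\frac{p}{2}}\big)^{(2-p)/2}$ and integrating gives
\begin{equation*}
\|\nabla w\|_{L^p(\Omega)}^p \leq C\,I^{p/2}\,J^{(2-p)/2},
\end{equation*}
where $I$ is bounded by $(c\underline\sigma)^{-1}$ times the monotonicity integral, hence by the H\"older estimate of the right-hand side, and $J := \int_\Omega (\tau^2+|\nabla u_{\sigma_1}|^2+|\nabla u_{\sigma_0}|^2)^{p/2}\dx$.

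It then remains to estimate $J \leq C(\tau^p|\Omega| + \|\nabla u_{\sigma_1}\|_{L^p(\Omega)}^p + \|\nabla u_{\sigma_0}\|_{L^p(\Omega)}^p) \leq C(\tau^p + d(f_{\underline\sigma},p,\tau)^p)$ via Theorem~\ref{thm:alku}, and to use subadditivity of $t \mapsto t^{(2-p)/p}$ (valid since $0 < (2-p)/p < 1$) to obtain $J^{(2-p)/p} \leq C(\tau^{2-p} + d(f_{\underline\sigma},p,\tau)^{2-p})$. Substituting the H\"older bound for $I$ into the interpolation inequality, dividing by the resulting factor $\|\nabla w\|_{L^p(\Omega)}^{p/2}$ (assuming $\nabla w \neq 0$, the claim being trivial otherwise) and raising to the power $2/p$ yields \eqref{eq:conv} with the first branch of \eqref{eq:Cprime} and exponent $\min\{1,q/p\}=1$. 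The main obstacle is precisely this $1 < p < 2$ bookkeeping: organizing the H\"older interpolation so that the degenerate weight cancels, and faithfully tracking the $\tau$- and $d$-dependence so that the constant matches \eqref{eq:Cprime}.
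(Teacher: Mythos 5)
Your proposal is correct and takes essentially the same route as the paper's own proof: subtract the weak formulations with $v=u_{\sigma_1}-u_{\sigma_0}$, bound the perturbation term by H\"older's inequality together with \eqref{eq:nabq} (majorizing via $f_{\underline{\sigma}}$), and invoke the monotonicity inequality \eqref{eq:basic_ineq} --- directly for $2\le p<\infty$, where it upgrades to $(D\varphi_{p,\tau}(\x)-D\varphi_{p,\tau}(\y))\cdot(\x-\y)\ge c\,|\x-\y|^p$, and through the same H\"older interpolation with conjugate exponents $2/p$ and $2/(2-p)$ for $1<p<2$, with identical $\tau$- and $d$-bookkeeping yielding \eqref{eq:Cprime}. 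The only cosmetic difference is that you weight the monotonicity integral with $\sigma_1$ and keep $D\varphi_{p,\tau}(\nabla u_{\sigma_0})$ on the right-hand side, whereas the paper does the mirror image with $\sigma_0$ and $D\varphi_{p,\tau}(\nabla u_{\sigma_1})$, which is immaterial by symmetry of $\underline{\sigma}$.
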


\begin{proof}
In this proof the generic constant $C>0$ depends only on $\Omega$ and $p$.

Following the main line of reasoning in the proof of~\cite[Lemma~3.2]{Guo16}, we define
\begin{align*}
I &:= \int_\Omega \sigma_0\big( \tau^2 + |\nabla u_{\sigma_1}|^2 + |\nabla u_{\sigma_0}|^2\big)^{\frac{p-2}{2}} | \nabla u_{\sigma_1} - \nabla u_{\sigma_0} |^2 \, {\rm d} x \\[1mm]
&\ \leq  C \int_{\Omega} \sigma_0\big( D\varphi_{p,\tau}(\nabla u_{\sigma_1}) -  D\varphi_{p,\tau}(\nabla u_{\sigma_0}) \big) \cdot (\nabla u_{\sigma_1}- \nabla u_{\sigma_0}) \, {\rm d} x,
\end{align*}
where the inequality holds because of \eqref{eq:basic_ineq}. By subtracting the variational equations~\eqref{eq:varplaplace} for $\sigma_0$ and $\sigma_1$ with $v=u_{\sigma_1} - u_{\sigma_0}$, it thus follows that 
\begin{align}
\label{eq:I_ylaraja}
I &\leq C \int_{\Omega} (\sigma_0 - \sigma_1) D\varphi_{p,\tau}(\nabla u_{\sigma_1}) \cdot (\nabla u_{\sigma_1}- \nabla u_{\sigma_0}) \, {\rm d} x \nonumber \\[2mm]
& \leq C \| \sigma_1 - \sigma_0 \|_{L^\infty(\Omega)} \| D\varphi_{p,\tau}(\nabla u_{\sigma_1}) \|_{L^q(\Omega)} \| \nabla u_{\sigma_1}- \nabla u_{\sigma_0}\|_{L^p(\Omega)},
\end{align}
where we also used H\"older's inequality.

If $2 \leq p < \infty$, then obviously
$$
\| \nabla u_{\sigma_1} - \nabla u_{\sigma_0} \|^p_{L^p(\Omega)} \leq  C \frac{I}{{\rm ess} \inf \sigma_0}.
$$
Hence, the claim for $2 \leq p < \infty$ follows by using \eqref{eq:I_ylaraja} and \eqref{eq:nabq}, dividing by $\| \nabla u_{\sigma_1}- \nabla u_{\sigma_0}\|_{L^p(\Omega)}$ and taking the $(p-1)$th root.

To complete the proof, let $1< p \leq 2$. Writing
$$
1 = \big(\tau^2+|\nabla u_{\sigma_1}|^2 + |\nabla u_{\sigma_0}|^2\big)^{\frac{p(2-p)}{4}} \big(\tau^2+|\nabla u_{\sigma_1}|^2 + |\nabla u_{\sigma_0}|^2\big)^{\frac{p(p-2)}{4}}
$$
and applying H\"older's inequality with the conjugate exponents $r = 2/(2-p)$ and $r'=2/p$, one can deduce that
\begin{align*}
\| \nabla u_{\sigma_1} - \nabla u_{\sigma_0} \|^p_{L^p(\Omega)} &\leq \Big(\int_\Omega  \big(\tau^2+|\nabla u_{\sigma_1}|^2 + |\nabla u_{\sigma_0}|^2\big)^{\frac{p}{2}}  {\rm d} x\Big)^{\frac{2-p}{2}} \times \\[0mm]
& \qquad \times \Big( \int_\Omega  \big(\tau^2+|\nabla u_{\sigma_1}|^2 + |\nabla u_{\sigma_0}|^2\big)^{\frac{p-2}{2}}  |\nabla u_{\sigma_1} - \nabla u_{\sigma_0} |^2 \, {\rm d} x \Big)^{\frac{p}{2}}  \\[0mm]
& \leq C \big(\tau^p+\|\nabla u_{\sigma_1}\|_{L^p(\Omega)}^p + \|\nabla u_{\sigma_0}\|_{L^p(\Omega)}^p\big)^{\frac{2-p}{2}} \, \left(\frac{I}{{\rm ess} \inf \sigma_0}\right)^{\frac{p}{2}} \\[0mm]
& \leq C \Big(\tau^{\frac{p(2-p)}{2}} + d(f_{\underline{\sigma}},p,\tau)^{\frac{p(2-p)}{2}} \Big) \, \left(\frac{I}{{\rm ess} \inf \sigma_0}\right)^{\frac{p}{2}},
\end{align*}
where the last step follows from \eqref{eq:upperbound}.
After employing \eqref{eq:I_ylaraja} and \eqref{eq:nabq}, the claim for $1< p \leq 2$ follows by dividing with $\| \nabla u_{\sigma_1}- \nabla u_{\sigma_0}\|_{L^p(\Omega)}^{p/2}$ and taking the $(2/p)$th power of the resulting inequality.
\end{proof}

It is worth noting that \eqref{eq:Cprime} takes the form
$$
C' = C \left\{ 
\begin{array}{ll}
 \underline{\sigma}^{-q} \| f \|_{L^q(\partial \Omega)}^{\frac{q}{p}}  & \quad {\rm if} \ 1 < p \leq 2, \\[2mm]
\underline{\sigma}^{-\frac{2q}{p}} \| f \|_{L^q(\partial \Omega)}^{\frac{q}{p}} & \quad {\rm if} \ 2 \leq p < \infty,
\end{array}
\right.
$$
when $\tau = 0$.

For our purposes it is important to attain Lipschitz continuity of the forward operator for all $1 < p < \infty$. This  will be achieved by assuming more regularity from the problem setting in the following subsection.

\subsection{H\"older conductivities}
Suppose $\partial \Omega$ is of the H\"older class $\mathcal{C}^{1,\alpha}$, conductivities live in $\mathcal{C}^{\alpha}(\overline{\Omega}) \cap L^\infty_+(\Omega)$ and the boundary current density in \eqref{eq:plaplace} satisfies $f\in \mathcal{C}^{\alpha}(\partial \Omega) \cap L^q_\diamond(\partial \Omega)$ for some $\alpha > 0$. Under these assumptions,
\begin{equation}
\label{eq:uniform}
\| \nabla u_{\sigma} \|_{\mathcal{C}^{\beta}(\overline{\Omega})} \leq C_{\mathcal{B}}
\end{equation}
for all $\sigma$ in any bounded subset $\mathcal{B}$ of $\mathcal{C}^{\alpha}(\overline{\Omega}) \cap L^\infty_+(\Omega)$ for which
$$
\inf_{\sigma \in \mathcal{B}}  \big({\rm ess} \inf \sigma \big) > 0.
$$
The constants $\beta > 0$ and $C_{\mathcal{B}} > 0$ in \eqref{eq:uniform} depend, in addition to $\mathcal{B}$,  on $\Omega$, $\alpha$, $1 < p < \infty$, $f$ and $\tau \geq 0$. See \cite[Theorem~2]{Liebermann88} for the details; cf.~ also \eqref{eq:posdef}, \eqref{eq:bound} and \cite[Section~5]{Maly16}. In what follows, $\mathcal{B}$ always denotes a subset of $\mathcal{C}^{\alpha}(\overline{\Omega}) \cap L^\infty_+(\Omega)$ with the above described properties.

Now one can easily prove the mapping  $\mathcal{C}^{\alpha}(\overline{\Omega}) \cap L^\infty_+(\Omega) \ni \sigma \mapsto u_\sigma \in W^{1,2}(\Omega)/\R$ is Lipschitz continuous if either $\tau > 0$ or $1 < p < 2$. The price one has to pay is that the dependence of the presented estimates on $f$ and $\tau$ becomes implicit.

\begin{lemma}
\label{lemma:perus2}
Let $u_{\sigma_0}, u_{\sigma_1} \in \mathcal{C}^{1,\beta}(\overline{\Omega})/\R$ be the solutions of \eqref{eq:varplaplace} corresponding to $\sigma_0, \sigma_1 \in \mathcal{B}$, respectively. If $\tau > 0$ or $1 < p \leq 2$, then
\begin{equation}
\label{eq:conv2}
\| \nabla u_{\sigma_1} - \nabla u_{\sigma_0} \|_{L^2(\Omega)} \leq C \| \sigma_1 - \sigma_0 \|_{L^\infty(\Omega)},
\end{equation}
where $C = C(\Omega, f, \mathcal{B}, p, \alpha,\tau)>0$ is independent of $\sigma_0, \sigma_1 \in\mathcal{B}$. 
\end{lemma}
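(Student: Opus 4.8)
The plan is to reuse the quantity $I$ and the monotonicity bound from the proof of Lemma~\ref{lemma:perus}, but to exploit the uniform gradient bound \eqref{eq:uniform} so as to replace the $L^q$--$L^p$ H\"older pairing by a plain $L^2$--$L^2$ Cauchy--Schwarz estimate. This is precisely what upgrades the H\"older continuity of Lemma~\ref{lemma:perus} to Lipschitz continuity when $p>2$, at the cost of burying the dependence on $f$ and $\tau$ inside the constant $C_{\mathcal{B}}$ of \eqref{eq:uniform}.

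Concretely, recall the quantity
\[
I = \int_\Omega \sigma_0\big(\tau^2 + |\nabla u_{\sigma_1}|^2 + |\nabla u_{\sigma_0}|^2\big)^{\frac{p-2}{2}}|\nabla u_{\sigma_1} - \nabla u_{\sigma_0}|^2 \dx
\]
from the proof of Lemma~\ref{lemma:perus}. The first inequality in \eqref{eq:I_ylaraja}, which is valid for every $1<p<\infty$, still reads
\[
I \leq C\int_\Omega(\sigma_0 - \sigma_1)\,D\varphi_{p,\tau}(\nabla u_{\sigma_1})\cdot(\nabla u_{\sigma_1} - \nabla u_{\sigma_0}) \dx,
\]
and first I would apply the Cauchy--Schwarz inequality to its right-hand side to get
\[
I \leq C\,\|\sigma_1 - \sigma_0\|_{L^\infty(\Omega)}\,\|D\varphi_{p,\tau}(\nabla u_{\sigma_1})\|_{L^2(\Omega)}\,\|\nabla u_{\sigma_1} - \nabla u_{\sigma_0}\|_{L^2(\Omega)}.
\]
By \eqref{eq:varphi} and the uniform bound \eqref{eq:uniform}, the middle factor is controlled by a constant depending only on $\mathcal{B}$, $\Omega$, $p$ and $\tau$, so that $I \leq C\,\|\sigma_1 - \sigma_0\|_{L^\infty(\Omega)}\,\|\nabla u_{\sigma_1} - \nabla u_{\sigma_0}\|_{L^2(\Omega)}$.

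The crux is to bound $I$ from below by $c\,\|\nabla u_{\sigma_1} - \nabla u_{\sigma_0}\|_{L^2(\Omega)}^2$, and this is exactly where the hypothesis ``$\tau>0$ or $1<p\leq 2$'' enters. The bound \eqref{eq:uniform} supplies $M>0$ with $|\nabla u_{\sigma_j}| \leq M$ on $\overline{\Omega}$ for $j=0,1$. When $1<p\leq 2$ the exponent $(p-2)/2$ is nonpositive, so the weight factor is bounded below by $(\tau^2 + 2M^2)^{(p-2)/2}>0$ for any $\tau\geq 0$; when $p>2$ the exponent is nonnegative and the base exceeds $\tau^2$, so the weight factor is bounded below by $\tau^{p-2}$, which is positive precisely because $\tau>0$. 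Combining the weight lower bound with $\sigma_0 \geq \inf_{\sigma\in\mathcal{B}}({\rm ess}\inf\sigma)>0$ then yields $I \geq c\,\|\nabla u_{\sigma_1} - \nabla u_{\sigma_0}\|_{L^2(\Omega)}^2$.

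Finally I would chain the two estimates: writing $G = \|\nabla u_{\sigma_1} - \nabla u_{\sigma_0}\|_{L^2(\Omega)}$, they give $c\,G^2 \leq I \leq C\,\|\sigma_1 - \sigma_0\|_{L^\infty(\Omega)}\,G$, and dividing by $G$ (the claim being trivial when $G=0$) produces \eqref{eq:conv2}. The only routine point is the passage from the $\mathcal{C}^\beta$ bound \eqref{eq:uniform} to the $L^\infty$ gradient bound and the $L^2$ bound on $D\varphi_{p,\tau}(\nabla u_{\sigma_1})$ used above, both of which follow from $\mathcal{C}^\beta(\overline{\Omega})\hookrightarrow L^\infty(\Omega)$, the boundedness of $\Omega$ and the explicit form \eqref{eq:varphi} of $D\varphi_{p,\tau}$. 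I expect the lower bound on the degenerate/singular weight under the stated dichotomy to be the main obstacle, since it is the sole place where the hypothesis on $p$ and $\tau$ is genuinely needed.
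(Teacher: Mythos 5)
Your proposal is correct and takes essentially the same route as the paper's proof: both subtract the variational equations, test with $v = u_{\sigma_1} - u_{\sigma_0}$, use the monotonicity inequality \eqref{eq:basic_ineq} together with the uniform gradient bound \eqref{eq:uniform} to bound the degenerate weight from below under the dichotomy ``$\tau>0$ or $1<p\leq 2$'', and close with the Cauchy--Schwarz inequality and the boundedness of $D\varphi_{p,\tau}(\nabla u_{\sigma_1})$. Your two-sided sandwich of the quantity $I$ borrowed from Lemma~\ref{lemma:perus} is merely a restructured presentation of the paper's single chain of estimates \eqref{eq:Lip}.
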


\begin{proof}
We start by subtracting the variational equations \eqref{eq:varplaplace} corresponding to the conductivities $\sigma_0$ and $\sigma_1$, which yields
$$
\int_\Omega \sigma_0 \big(D\varphi_{p,\tau}(\nabla u_{\sigma_1}) - D\varphi_{p,\tau}(\nabla u_{\sigma_0})  \big) \cdot \nabla v \, {\rm d} x 
=  \int_\Omega (\sigma_0-\sigma_1) D \varphi_{p,\tau} (\nabla u_{\sigma_1}) \cdot \nabla v \, {\rm d} x.
$$
Recalling \eqref{eq:uniform} and \eqref{eq:basic_ineq} and choosing $v = u_{\sigma_1} - u_{\sigma_0}$, it follows for $p \leq 2$ or $\tau > 0$ that
\begin{align}
\label{eq:Lip}
\int_\Omega | \nabla u_{\sigma_1} &- \nabla u_{\sigma_0} |^2 \, {\rm d} x
\leq C \int_\Omega ( \tau^2 + |\nabla u_{\sigma_1}|^2 + |\nabla u_{\sigma_0} |^2)^{\frac{p-2}{2}} | \nabla u_{\sigma_1} - \nabla u_{\sigma_0} |^2 \, {\rm d} x \nonumber \\[1mm]
&\leq C \int_\Omega \sigma_0 \big(D\varphi_{p,\tau}(\nabla u_{\sigma_1}) - D\varphi_{p,\tau}(\nabla u_{\sigma_0}) \big) \cdot (\nabla u_{\sigma_1} - \nabla u_{\sigma_0}) \, {\rm d} x \nonumber \\[1mm]
&\leq C \|\sigma_1 - \sigma_0 \|_{L^\infty(\Omega)} \|  D \varphi_{p,\tau} (\nabla u_{\sigma_1})  \|_{L^2(\Omega)} \| \nabla u_{\sigma_1}  - \nabla u_{\sigma_0} \|_{L^2(\Omega)} \nonumber \\[1mm]
&\leq C \|\sigma_1 - \sigma_0 \|_{L^\infty(\Omega)} 
 \big\|  (\tau^2 + |\nabla u_{\sigma_1}|^2)^{\frac{p-1}{2}}  \big\|_{L^2(\Omega)}
\| \nabla u_{\sigma_1}  - \nabla u_{\sigma_0}\|_{L^2(\Omega)},
\end{align}
where we also used the Schwarz inequality. Together with  \eqref{eq:uniform} this proves the claim. 
\end{proof}

In fact, the assertion of Lemma~\ref{lemma:perus2} also holds for all $2 < p < \infty$ and $\tau = 0$ if $\sigma_0 \in \mathcal{B}$ is fixed and the corresponding solution $u_{\sigma_0}$ has no critical points in $\Omega$, that is, $\nabla u_{\sigma_0} \not = 0$ everywhere in $\overline{\Omega}$. Indeed, as $\nabla u_{\sigma_0}$ is H\"older continuous, it follows that actually $|\nabla u_{\sigma_0}| \geq c > 0$ and, in particular,  the first estimate in \eqref{eq:Lip} remains valid even for $\tau = 0$ and $p>2$.

The following second lemma, which is an essential building block in the next section, is a simple generalization of \cite[Lemma~3.3]{Guo16}, where only the case $\tau = 0$ is considered.

\begin{lemma}
\label{cor:perus}
Let $u_{\sigma_0}, u_{\sigma_1} \in \mathcal{C}^{1,\beta}(\overline{\Omega})/\R$ be the solutions of \eqref{eq:varplaplace} corresponding to $\sigma_0, \sigma_1 \in \mathcal{B}$, respectively. For any $\tau \geq 0$ and $0< p < \infty$,
\begin{equation}
\label{eq:conv3}
\| \nabla u_{\sigma_1} - \nabla u_{\sigma_0} \|_{L^\infty(\Omega)} \leq C \| \sigma_1 - \sigma_0 \|_{L^\infty(\Omega)}^{\epsilon}
\end{equation}
for some constants $\epsilon(\Omega, f, \mathcal{B}, p, \alpha, \tau) > 0$ and $C(\Omega, f, \mathcal{B}, p, \alpha, \tau)>0$ that are independent of $\sigma_0, \sigma_1 \in \mathcal{B}$.
\end{lemma}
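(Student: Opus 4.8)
The plan is to upgrade the global $L^p$ perturbation bound of Lemma~\ref{lemma:perus} to an $L^\infty$ bound by interpolating against the uniform H\"older bound \eqref{eq:uniform}, accepting a smaller exponent in the process. Set $w := |\nabla u_{\sigma_1} - \nabla u_{\sigma_0}|$. Since $\big| |v(x)| - |v(y)| \big| \leq |v(x) - v(y)|$, the estimate \eqref{eq:uniform} places $w$ in a bounded subset of $\mathcal{C}^\beta(\overline{\Omega})$, namely $\| w \|_{\mathcal{C}^\beta(\overline{\Omega})} \leq \| \nabla u_{\sigma_1} \|_{\mathcal{C}^\beta(\overline{\Omega})} + \| \nabla u_{\sigma_0} \|_{\mathcal{C}^\beta(\overline{\Omega})} \leq 2 C_{\mathcal{B}}$ uniformly over $\sigma_0,\sigma_1 \in \mathcal{B}$, while Lemma~\ref{lemma:perus} controls its $L^p$ norm via $\| w \|_{L^p(\Omega)} \leq C' \| \sigma_1 - \sigma_0 \|_{L^\infty(\Omega)}^{\min\{1, q/p\}}$. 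Note that no non-degeneracy hypothesis enters here: both \eqref{eq:uniform} and Lemma~\ref{lemma:perus} are valid for all $1 < p < \infty$ and $\tau \geq 0$, which is why the statement holds for arbitrary $\tau \geq 0$.

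The core of the argument is the interpolation inequality
$$
\| w \|_{L^\infty(\Omega)} \leq C \, \| w \|_{\mathcal{C}^\beta(\overline{\Omega})}^{\theta} \, \| w \|_{L^p(\Omega)}^{1-\theta}, \qquad \theta = \frac{n}{p\beta + n},
$$
valid for every $w \in \mathcal{C}^\beta(\overline{\Omega})$ on the $\mathcal{C}^{1,\alpha}$ domain $\Omega$. I would prove it by a ball argument: writing $A := \| w \|_{L^\infty(\Omega)}$ and $M := \| w \|_{\mathcal{C}^\beta(\overline{\Omega})}$, pick $x_0 \in \overline{\Omega}$ where $|w|$ is nearly maximal and observe that $|w| \geq A/2$ on $B(x_0, r) \cap \Omega$ for $r = (A/(2M))^{1/\beta}$. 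The Lipschitz regularity of $\partial\Omega$ furnishes a measure density bound $|B(x_0,r)\cap\Omega| \geq c\, r^n$ for small $r$, whence
$$
\| w \|_{L^p(\Omega)}^p \geq (A/2)^p\, |B(x_0,r)\cap\Omega| \geq c\, (A/2)^p\, (A/(2M))^{n/\beta},
$$
and solving for $A$ gives the displayed inequality with the stated $\theta$. (In the borderline regime where $(A/(2M))^{1/\beta}$ exceeds the scale on which the density bound holds, one instead takes $r$ equal to that scale and obtains an even stronger, $L^p$-linear bound on $A$, which combined with the uniform boundedness of $\| w \|_{L^p(\Omega)}$ is again dominated by the right-hand side.)

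Inserting the two bounds from the first paragraph into this inequality, and absorbing the fixed factor $\| w \|_{\mathcal{C}^\beta(\overline{\Omega})}^{\theta} \leq (2C_{\mathcal{B}})^{\theta}$ into the constant, yields
$$
\| \nabla u_{\sigma_1} - \nabla u_{\sigma_0} \|_{L^\infty(\Omega)} \leq C \, \| \sigma_1 - \sigma_0 \|_{L^\infty(\Omega)}^{\min\{1, q/p\}\,(1-\theta)},
$$
which is precisely \eqref{eq:conv3} with the explicit exponent $\epsilon = \min\{1, q/p\}\, \dfrac{p\beta}{p\beta + n} > 0$, the constant $C$ depending on $\Omega, f, \mathcal{B}, p, \alpha, \tau$ through $C'$, $C_{\mathcal{B}}$ and the interpolation constant. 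Only some fixed $L^p$-smallness (any exponent $\geq 1$ would do) together with the H\"older bound is needed, so the precise value of $p$ enters just through Lemma~\ref{lemma:perus} and $\beta$.

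I expect the main obstacle to be the clean justification of the interpolation inequality on the Lipschitz domain, specifically the uniform interior/boundary measure density estimate $|B(x_0,r)\cap\Omega| \geq c\,r^n$ near $\partial\Omega$ and the bookkeeping for the borderline radius regime; everything else is a routine substitution of the two a priori estimates.
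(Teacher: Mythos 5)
Your proposal is correct and follows essentially the same route as the paper: the paper's proof likewise obtains \eqref{eq:conv3} by interpolating the uniform H\"older bound $\| \nabla u_{\sigma_1} - \nabla u_{\sigma_0} \|_{\mathcal{C}^{\beta}(\overline{\Omega})} \leq 2 C_\mathcal{B}$ (from \eqref{eq:uniform}) against the $L^p$ perturbation estimate \eqref{eq:conv} of Lemma~\ref{lemma:perus}, simply citing the interpolation result \cite[Lemma~A.2]{Guo16} rather than proving it. The only difference is that you supply a self-contained proof of that interpolation inequality via the measure-density ball argument (including the borderline-radius case), which is sound and yields the explicit exponent $\epsilon = \min\{1,q/p\}\,p\beta/(p\beta+n)$.
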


\begin{proof}
The claim directly follows by applying a suitable interpolation result \cite[Lemma~A.2]{Guo16} to the combination of
$$
\| \nabla u_{\sigma_1} - \nabla u_{\sigma_0} \|_{\mathcal{C}^{\beta}(\overline{\Omega})} \leq 2 \, C_\mathcal{B}
$$
and \eqref{eq:conv}.
\end{proof}

\section{Fr\'echet derivative for \texorpdfstring{$\tau>0$}{tau>0}}
\label{sec:Frechet}
In this section we continue to assume that $\partial \Omega \in \mathcal{C}^{1,\alpha}$, $\sigma \in \mathcal{C}^{\alpha}(\overline{\Omega}) \cap L^\infty_+(\Omega)$ and the (fixed) boundary current density in \eqref{eq:plaplace} satisfies $f\in \mathcal{C}^{\alpha}(\partial \Omega) \cap L^q_\diamond(\partial \Omega)$ for some $\alpha > 0$. In addition, we only consider the case $\tau > 0$, if not explicitly stated otherwise. The aim is to prove the forward map $\mathcal{C}^{\alpha}(\overline{\Omega}) \cap L^\infty_+(\Omega) \ni \sigma \mapsto u_\sigma \in W^{1,2}(\Omega)/\R$ is Fr\'echet differentiable.

Let us consider the following linear `derivative problem': For $\eta \in L^\infty(\Omega)$, find $u_\sigma'(\eta) \in W^{1,2}(\Omega)/\R$ such that
\begin{equation}
\label{eq:deri}
\int_\Omega \sigma H_{p,\tau}(\nabla u_\sigma) \nabla u'_\sigma(\eta) \cdot \nabla v \, {\rm d} x 
= - \int_{\partial \Omega} \eta D \varphi_{p,\tau} (\nabla u_\sigma) \cdot \nabla v \, {\rm d} x
\end{equation}
for all $v \in W^{1,2}(\Omega)/\R$. As always, $u_\sigma \in \mathcal{C}^{1,\beta}(\overline{\Omega})/\R$ is the solution of \eqref{eq:varplaplace} and $H_{p, \tau}: \R^n \to \R^{n \times n}$ is the Hessian of $\varphi_{p, \tau}: \R^n \to \R$ given explicitly in \eqref{eq:Hessian}. 

\begin{lemma}
\label{lemma:LM}
The variational problem \eqref{eq:deri} has a unique solution $u_\sigma'(\eta) \in W^{1,2}(\Omega)/\R$ that satisfies
$$
\| u_\sigma'(\eta) \|_{W^{1,2}(\Omega)/\R} \leq C \| \eta \|_{L^\infty(\Omega)}
$$
where $C = C(\Omega, f, p, \alpha, \sigma, \tau)>0$ is independent of $\eta \in L^\infty(\Omega)$.
\end{lemma}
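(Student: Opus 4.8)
The plan is to apply the Lax--Milgram theorem to the bilinear form
$$
a(w,v) := \int_\Omega \sigma\, H_{p,\tau}(\nabla u_\sigma)\, \nabla w \cdot \nabla v \dx,
\qquad w,v \in W^{1,2}(\Omega)/\R,
$$
and the linear functional $F(v) := -\int_\Omega \eta\, D\varphi_{p,\tau}(\nabla u_\sigma)\cdot \nabla v \dx$ appearing on the right-hand side of \eqref{eq:deri}. (Note that in the excerpt the right-hand side of \eqref{eq:deri} is written as a boundary integral $\int_{\partial\Omega}$, but for the functional to make sense with $\eta \in L^\infty(\Omega)$ and $\nabla v$ present in the integrand this should be an interior integral over $\Omega$; I will treat it as such.) Since $u_\sigma \in \mathcal{C}^{1,\beta}(\overline\Omega)/\R$ by the regularity \eqref{eq:uniform}, the gradient $\nabla u_\sigma$ is bounded on $\overline\Omega$, so both the coefficient matrix $\sigma H_{p,\tau}(\nabla u_\sigma)$ and the vector field $D\varphi_{p,\tau}(\nabla u_\sigma)$ are uniformly bounded; this is the feature that makes the $W^{1,2}$-setting natural.

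First I would verify continuity of $a$. Using the explicit Hessian from \eqref{eq:Hessian} together with the upper bound \eqref{eq:bound} (referenced in the text after \eqref{eq:uniform}), one gets a pointwise estimate of the form $|\sigma H_{p,\tau}(\nabla u_\sigma)| \leq C(\tau^2 + |\nabla u_\sigma|^2)^{(p-2)/2} \leq M$, where $M$ depends on $\|\nabla u_\sigma\|_{L^\infty(\Omega)}$, $\tau$, $p$ and $\|\sigma\|_{L^\infty(\Omega)}$; the Cauchy--Schwarz inequality then yields $|a(w,v)| \leq M \|\nabla w\|_{L^2(\Omega)} \|\nabla v\|_{L^2(\Omega)}$, which is the desired boundedness on $W^{1,2}(\Omega)/\R$. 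Continuity of $F$ follows the same way: since $|D\varphi_{p,\tau}(\nabla u_\sigma)| \leq (\tau^2 + |\nabla u_\sigma|^2)^{(p-1)/2}$ is bounded on $\overline\Omega$, we obtain $|F(v)| \leq C \|\eta\|_{L^\infty(\Omega)} \|\nabla v\|_{L^2(\Omega)}$, which simultaneously gives the bound on the right-hand side that produces the claimed estimate $\|u_\sigma'(\eta)\|_{W^{1,2}(\Omega)/\R} \leq C\|\eta\|_{L^\infty(\Omega)}$ once coercivity is in hand.

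The main obstacle is coercivity of $a$, and this is exactly where $\tau > 0$ is essential. The Hessian $H_{p,\tau}(\xi)$ of the smoothened energy satisfies an ellipticity lower bound $H_{p,\tau}(\xi)\zeta\cdot\zeta \geq c(\tau^2+|\xi|^2)^{(p-2)/2}|\zeta|^2$ for all $\zeta$ (this is the companion to \eqref{eq:posdef} / \eqref{eq:basic_ineq} recorded in Appendix~\ref{app:A}). Since $\nabla u_\sigma$ is bounded on $\overline\Omega$, say $|\nabla u_\sigma| \leq R$, and $\sigma \geq {\rm ess}\inf\sigma > 0$, the weight is bounded below by $(\tau^2 + R^2)^{(p-2)/2}$ when $p \leq 2$ and by $\tau^{p-2}$ when $p > 2$; in either case the strict positivity of $\tau$ guarantees a uniform constant $\lambda > 0$ with $\sigma H_{p,\tau}(\nabla u_\sigma)\zeta\cdot\zeta \geq \lambda |\zeta|^2$ pointwise. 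Integrating gives $a(v,v) \geq \lambda \|\nabla v\|_{L^2(\Omega)}^2$, and coercivity on the quotient space follows because $\|\nabla v\|_{L^2(\Omega)}$ is an equivalent norm on $W^{1,2}(\Omega)/\R$ via the Poincaré inequality. With continuity, coercivity and boundedness of $F$ established, the Lax--Milgram theorem delivers the unique solution $u_\sigma'(\eta)$, and tracking the constants through the coercivity and continuity bounds — the ellipticity constant $\lambda$ degenerating as $\tau \to 0$ when $p > 2$ explains the dependence of $C$ on $\tau$ — yields the stated stability estimate.
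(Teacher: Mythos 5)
Your proposal is correct and takes essentially the same route as the paper's own proof: boundedness of the bilinear form via \eqref{eq:bound} together with \eqref{eq:uniform}, pointwise coercivity via \eqref{eq:posdef} (with the ellipticity constant degenerating as $\tau \to 0$ only when $p>2$), boundedness of the right-hand side functional by $C\|\eta\|_{L^\infty(\Omega)}\|\nabla v\|_{L^2(\Omega)}$, and then the Lax--Milgram theorem on the quotient space. You also correctly identified that the $\int_{\partial\Omega}$ in \eqref{eq:deri} is a typo for $\int_\Omega$; the paper's proof implicitly treats it the same way.
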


\begin{proof}
The bilinear form defined by the left-hand side of \eqref{eq:deri} is bounded, that is, 
\begin{align}
\label{eq:LMbound}
\bigg| \int_\Omega \sigma H_{p,\tau} & (\nabla u_\sigma) \nabla w \cdot \nabla v \, {\rm d} x \bigg| \nonumber \\[0.5mm]
&\leq C \|\sigma\|_{L^\infty(\Omega)} \big\|(\tau^2 + |\nabla u_\sigma|^2)^{\frac{p-2}{2}}\big\|_{L^\infty(\Omega)} \|\nabla w\|_{L^2(\Omega)} \|\nabla v\|_{L^2(\Omega) } \nonumber \\[1mm]
&\leq C \|w\|_{W^{1,2}(\Omega)/\R} \|v\|_{W^{1,2}(\Omega) /\R}
\end{align}
for all $w, v \in W^{1,2}(\Omega)/\R$ by virtue of \eqref{eq:bound} and \eqref{eq:uniform}. It is also coercive: 
\begin{align}
\label{eq:LMcouer}
\int_\Omega \sigma H_{p,\tau}(\nabla u_\sigma) \nabla v \cdot \nabla v \, {\rm d} x  & \geq {\rm ess} \inf \! \big(\sigma \, (\tau^2 + c(p) |\nabla u_\sigma |^2)^{\frac{p-2}{2}}\big)   \|\nabla v \|_{L^2(\Omega)}^2 \nonumber \\
& \geq c \|v\|_{W^{1,2}(\Omega) /\R}^2,
\end{align}
for any $v \in W^{1,2}(\Omega)/\C$ due to \eqref{eq:posdef} and \eqref{eq:uniform}. Since $|\nabla u_\sigma|$ is bounded, it is obvious that the right hand-side of \eqref{eq:deri} defines a bounded linear map on $W^{1,2}(\Omega)/\C$. To be more precise,
$$
\left|\int_{\partial \Omega} \eta D \varphi_{p,\tau} (\nabla u_\sigma) \cdot \nabla v \, {\rm d} x \right| \leq C \|\eta\|_{L^\infty(\Omega)} \|\nabla v \|_{L^2(\Omega)} 
$$
for any $v \in W^{1,2}(\Omega)/\C$. To sum up, the assertion follows from the Lax--Milgram theorem.
\end{proof}

The main theorem of this work is as follows:

\begin{theorem}
\label{thm:main}
Assume that $\tau > 0$, $\partial \Omega \in \mathcal{C}^{1,\alpha}$, $\sigma \in \mathcal{C}^{\alpha}(\overline{\Omega}) \cap L^\infty_+(\Omega)$ and $f\in \mathcal{C}^{\alpha}(\partial \Omega) \cap L^q_\diamond(\partial \Omega)$. Then, the mapping
$$
\mathcal{C}^{\alpha}(\overline{\Omega}) \cap L^\infty_+(\Omega) \ni \sigma \mapsto u_\sigma \in W^{1,2}(\Omega)/\R
$$
is Fr\'echet differentiable. The Fr\'echet derivative is given by the linear and bounded map
$$
\mathcal{C}^{\alpha}(\overline{\Omega}) 
\ni \eta \mapsto u'_\sigma(\eta) \in W^{1,2}(\Omega)/\R,
$$
where $u'_\sigma(\eta) \in W^{1,2}(\Omega)/\R$ is the unique solution of \eqref{eq:deri}.
\end{theorem}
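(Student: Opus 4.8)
The plan is to control the remainder
\[
w \;:=\; u_{\sigma+\eta} - u_\sigma - u_\sigma'(\eta) \;\in\; W^{1,2}(\Omega)/\R,
\]
showing $\|w\|_{W^{1,2}(\Omega)/\R} = o(\|\eta\|_{\mathcal{C}^{\alpha}(\overline{\Omega})})$ as $\|\eta\|_{\mathcal{C}^{\alpha}(\overline{\Omega})}\to 0$; by the quotient-norm equivalence recorded in Section~\ref{sec:setting} it suffices to prove $\|\nabla w\|_{L^2(\Omega)} = o(\|\eta\|_{\mathcal{C}^{\alpha}(\overline{\Omega})})$. For $\|\eta\|_{\mathcal{C}^{\alpha}(\overline{\Omega})}$ small enough, $\sigma+\eta$ still lies in a fixed bounded set $\mathcal{B}\subset\mathcal{C}^{\alpha}(\overline{\Omega})\cap L^\infty_+(\Omega)$ bounded away from zero, so the uniform bound \eqref{eq:uniform} applies simultaneously to $u_\sigma$ and $u_{\sigma+\eta}$; in particular there is a fixed compact (ball) $K\subset\R^n$ containing the ranges of both $\nabla u_\sigma$ and $\nabla u_{\sigma+\eta}$, hence also of every segment $\nabla u_\sigma + t(\nabla u_{\sigma+\eta}-\nabla u_\sigma)$, $t\in[0,1]$. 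Throughout write $\delta := u_{\sigma+\eta}-u_\sigma$.

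First I would derive the variational identity for $w$. Subtracting the weak equations \eqref{eq:varplaplace} for $\sigma+\eta$ and $\sigma$ gives, for all $v\in W^{1,2}(\Omega)/\R$,
\[
\int_\Omega \sigma\big(D\varphi_{p,\tau}(\nabla u_{\sigma+\eta})-D\varphi_{p,\tau}(\nabla u_\sigma)\big)\cdot\nabla v\dx
= -\int_\Omega \eta\, D\varphi_{p,\tau}(\nabla u_{\sigma+\eta})\cdot\nabla v\dx .
\]
Since $\varphi_{p,\tau}$ is $C^2$ for $\tau>0$ and the relevant segment stays in $K$, I would use the fundamental theorem of calculus to write $D\varphi_{p,\tau}(\nabla u_{\sigma+\eta})-D\varphi_{p,\tau}(\nabla u_\sigma)=M\nabla\delta$ with the matrix $M:=\int_0^1 H_{p,\tau}(\nabla u_\sigma+t\nabla\delta)\dt$. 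Subtracting the derivative equation \eqref{eq:deri} and adding and subtracting $H_{p,\tau}(\nabla u_\sigma)\nabla\delta$ then yields
\begin{align*}
\int_\Omega \sigma H_{p,\tau}(\nabla u_\sigma)\nabla w\cdot\nabla v\dx
&= -\int_\Omega \sigma\big(M-H_{p,\tau}(\nabla u_\sigma)\big)\nabla\delta\cdot\nabla v\dx \\
&\quad -\int_\Omega \eta\big(D\varphi_{p,\tau}(\nabla u_{\sigma+\eta})-D\varphi_{p,\tau}(\nabla u_\sigma)\big)\cdot\nabla v\dx
\end{align*}
for all $v\in W^{1,2}(\Omega)/\R$.

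Next I would test this identity with $v=w$ and invoke the coercivity estimate \eqref{eq:LMcouer} from the proof of Lemma~\ref{lemma:LM}, bounding the left-hand side below by $c\|\nabla w\|_{L^2(\Omega)}^2$. For the $\eta$-term on the right I would use that $D\varphi_{p,\tau}$ is Lipschitz on $K$ (its derivative $H_{p,\tau}$ is bounded there by \eqref{eq:bound}) together with the $L^2$-Lipschitz estimate \eqref{eq:conv2} of Lemma~\ref{lemma:perus2}, obtaining a bound $C\|\eta\|_{L^\infty(\Omega)}\|\nabla\delta\|_{L^2(\Omega)}\|\nabla w\|_{L^2(\Omega)}\le C\|\eta\|_{L^\infty(\Omega)}^2\|\nabla w\|_{L^2(\Omega)}$. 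The Taylor-remainder term is the crux: I would bound $|M-H_{p,\tau}(\nabla u_\sigma)|$ pointwise by $\omega_K(\|\nabla\delta\|_{L^\infty(\Omega)})$, where $\omega_K$ is the modulus of continuity of $H_{p,\tau}$ on $K$, and then invoke Lemma~\ref{cor:perus} to get $\|\nabla\delta\|_{L^\infty(\Omega)}\to 0$, so $\omega_K(\|\nabla\delta\|_{L^\infty(\Omega)})\to 0$, while Lemma~\ref{lemma:perus2} again controls $\|\nabla\delta\|_{L^2(\Omega)}\le C\|\eta\|_{L^\infty(\Omega)}$. This makes the term at most $\omega_K(\|\nabla\delta\|_{L^\infty(\Omega)})\,C\|\eta\|_{L^\infty(\Omega)}\|\nabla w\|_{L^2(\Omega)}=o(\|\eta\|_{L^\infty(\Omega)})\,\|\nabla w\|_{L^2(\Omega)}$. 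Dividing by $\|\nabla w\|_{L^2(\Omega)}$ and using $\|\eta\|_{L^\infty(\Omega)}\le\|\eta\|_{\mathcal{C}^{\alpha}(\overline{\Omega})}$ gives the claimed $o(\|\eta\|_{\mathcal{C}^{\alpha}(\overline{\Omega})})$ bound, while linearity and boundedness of $\eta\mapsto u_\sigma'(\eta)$ are exactly the content of Lemma~\ref{lemma:LM}.

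The hard part will be precisely this Taylor-remainder estimate, and it is where the hypotheses bite: one simultaneously needs the quantitative $L^2$ estimate of Lemma~\ref{lemma:perus2} (linear in $\|\eta\|$) and the qualitative $L^\infty$ estimate of Lemma~\ref{cor:perus} (forcing $\omega_K(\|\nabla\delta\|_{L^\infty(\Omega)})\to 0$), and one needs $\tau>0$ so that $\varphi_{p,\tau}$ is genuinely $C^2$ with a Hessian that is bounded and uniformly positive definite on the compact range $K$ of the gradients (through \eqref{eq:posdef}, \eqref{eq:bound} and \eqref{eq:uniform}). This is what at once supplies the coercivity of the limiting bilinear form and the continuity of $H_{p,\tau}$ that the linearization demands, and it is exactly what degenerates when $\tau=0$ unless $\nabla u_\sigma$ stays bounded away from zero.
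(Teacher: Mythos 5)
Your proposal is correct and follows essentially the same route as the paper: the identical remainder identity for $w = u_{\sigma+\eta}-u_\sigma-u_\sigma'(\eta)$ (the paper's \eqref{eq:frest1}, up to sign conventions), testing with $v=w$, the coercivity bound \eqref{eq:LMcouer}, and the same interplay of Lemma~\ref{lemma:perus2} (quantitative $L^2$) with Lemma~\ref{cor:perus} (qualitative $L^\infty$). The only cosmetic deviation is that you bound the Taylor remainder via the averaged Hessian $M$ and the modulus of continuity $\omega_K$ of $H_{p,\tau}$, obtaining $o(\|\eta\|_{L^\infty(\Omega)})$, whereas the paper uses the third-derivative bound \eqref{eq:bound2} to get the slightly stronger quantitative rate $C\|\eta\|_{L^\infty(\Omega)}^{1+\epsilon}$ --- a strengthening your argument also yields, since $H_{p,\tau}$ is in fact Lipschitz on $K$ for $\tau>0$, so $\omega_K(t)\leq Ct$.
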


\begin{proof}
Consider the difference of the variational equations \eqref{eq:varplaplace} for the conductivities $\sigma, \sigma + \eta \in \mathcal{B}$ and  subtract \eqref{eq:deri}. After rearranging terms, one arrives at
\begin{align}
\label{eq:frest1}
\int_\Omega  & \sigma  H_{p,\tau} (\nabla u_\sigma) \nabla \big(u_{\sigma+\eta} - u_\sigma - u'_\sigma(\eta)\big) \cdot \nabla v \, {\rm d} x  \\[1mm]
&=  \int_{\partial \Omega} \sigma \big(D\varphi_{p,\tau}(\nabla u_{\sigma}) - D\varphi_{p,\tau}(\nabla u_{\sigma+\eta}) - H_{p,\tau}(\nabla u_\sigma) (\nabla u_{\sigma} - \nabla u_{\sigma+\eta}) \big) \cdot \nabla v \, {\rm d} x   \nonumber \\[1mm]
& \quad  + \int_{\partial \Omega} \eta \big(D\varphi_{p,\tau}(\nabla u_\sigma) - D\varphi_{p,\tau}(\nabla u_{\sigma+\eta})\big) \cdot \nabla v \, {\rm d} x. \nonumber 
\end{align}
Let us estimate the two terms on the right-hand side of \eqref{eq:frest1} in turns.

By Taylor's theorem,
$$
\big| D\varphi_{p,\tau}(\x) - D\varphi_{p,\tau}(\y) - H_{p,\tau}(\x) (\x - \y)\big| \leq  C \!\! \max_{1\leq j,k,l\leq n} \max_{\z \in [\x,\y]} \left| \frac{\partial^3 \varphi_{p,\tau}}{\partial \x_j \partial \x_k \partial \x_l}(\z) \right|
| \x - \y |^2
$$
for all $\x,\y \in \R^n$. Combining this with \eqref{eq:uniform} and \eqref{eq:bound2} demonstrates that the absolute value of the first term on the right-hand side of \eqref{eq:frest1} is bounded by a constant times
\begin{align*}
\int_{\partial \Omega} \sigma |\nabla & u_{\sigma+\eta} - \nabla u_\sigma |^2 |\nabla v | \, {\rm d} x \\
&\leq C \|\nabla u_{\sigma+\eta} - \nabla u_\sigma \|_{L^\infty(\Omega)} \|\nabla u_{\sigma+\eta} - \nabla u_\sigma \|_{L^2(\Omega)} \| \nabla v \|_{L^2(\Omega)} \\[1mm]
& \leq  C \|\eta\|_{L^\infty(\Omega)}^{1+\epsilon} \| \nabla v \|_{L^2(\Omega)},
\end{align*}
where the last step follows from a combination of Lemmas~\ref{lemma:perus2} and \ref{cor:perus}.

To handle the second term on the right-hand side of \eqref{eq:frest1}, note that
$$
| D\varphi_{p,\tau}(\x) - D\varphi_{p,\tau}(\y) | \leq C | \x - \y |
$$
uniformly over any bounded set of $\R^n$ due to \eqref{eq:bound}. 
Hence,
\begin{align*}
\left|\int_{\partial \Omega} \eta \big(D\varphi_{p,\tau}(\nabla u_{\sigma+\eta}) - D\varphi_{p,\tau}(\nabla u_{\sigma})\big) \cdot \nabla v \, {\rm d} x \right| 
& \leq C \int_\Omega |\eta| |\nabla u_{\sigma+\eta} - \nabla u_{\sigma}| |\nabla v| {\rm d} x \\[1mm]
& \leq C \|\eta\|_{L^\infty(\Omega)}^2  \| \nabla v \|_{L^2(\Omega)}
\end{align*}
where we also used \eqref{eq:uniform}, the Schwarz inequality and Lemma~\ref{lemma:perus2}.

Choosing $v = u_{\sigma+\eta} - u_\sigma - u'_\sigma(\eta)$ in \eqref{eq:frest1} and combining the above estimates with \eqref{eq:LMcouer} finally yields
$$
\|u_{\sigma+\eta} - u_\sigma - u'_\sigma(\eta)\|_{W^{1,2}(\Omega)/\R}^2 
\leq C \|\eta\|_{L^\infty(\Omega)}^{1+\epsilon} \|u_{\sigma+\eta} - u_\sigma - u'_\sigma(\eta)\|_{W^{1,2}(\Omega)/\R},
$$
that is,
$$
\frac{1}{\| \eta \|_{L^\infty(\Omega)}} \, \|u_{\sigma+\eta} - u_\sigma - u'_\sigma(\eta)\|_{W^{1,2}(\Omega)/\R} \leq C \|\eta\|_{L^\infty(\Omega)}^{\epsilon}, \qquad \eta \not=0,
$$
which is a stronger version of the claim since the weaker topology of $L^\infty(\Omega)$ is used for $\eta$. 
\end{proof}

\begin{corollary}
\label{cor:main}
Assume that $\tau = 0$, the other assumptions of Theorem~\ref{thm:main} hold and, in addition, $\nabla u_\gamma$ does not vanish in $\overline{\Omega}$ for some fixed $\gamma \in \mathcal{C}^{\alpha}(\overline{\Omega}) \cap L^\infty_+(\Omega)$. Then there exists an open neighborhood $\mathcal{B} \subset \mathcal{C}^{\alpha}(\overline{\Omega}) \cap L^\infty_+(\Omega)$ of $\gamma$ such that the mapping
$$
\mathcal{B} \ni \sigma \mapsto u_\sigma \in W^{1,2}(\Omega)/\R
$$
is Fr\'echet differentiable with the corresponding derivative defined by the unique solution $u' \in W^{1,2}(\Omega) / \R$ of \eqref{eq:deri} as in Theorem~\ref{thm:main}.
\end{corollary}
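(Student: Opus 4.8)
The plan is to show that, in a neighborhood of $\gamma$, the degeneracy at $\tau=0$ is harmless because the relevant gradients never approach the origin, so that the entire argument behind Lemma~\ref{lemma:LM} and Theorem~\ref{thm:main} can be rerun with the role previously played by $\tau$ taken over by a positive lower bound on $|\nabla u_\sigma|$. First I would produce the neighborhood $\mathcal{B}$. Since $\nabla u_\gamma$ is Hölder continuous on the compact set $\overline{\Omega}$ by \eqref{eq:uniform} and does not vanish there by hypothesis, there is a constant $2c>0$ with $|\nabla u_\gamma| \geq 2c$ everywhere in $\overline{\Omega}$. Lemma~\ref{cor:perus} then furnishes $\epsilon, C>0$ with $\|\nabla u_\sigma - \nabla u_\gamma\|_{L^\infty(\Omega)} \leq C \|\sigma - \gamma\|_{L^\infty(\Omega)}^{\epsilon}$, uniformly over an admissible parameter set. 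Shrinking $\mathcal{B}$ to a sufficiently small open $\mathcal{C}^{\alpha}$-ball around $\gamma$ — which can simultaneously be arranged to be bounded in $\mathcal{C}^{\alpha}(\overline{\Omega})$ and to satisfy $\inf_{\sigma\in\mathcal{B}}(\mathrm{ess}\inf\sigma)>0$, so that it qualifies as one of the admissible sets $\mathcal{B}$ and \eqref{eq:uniform} applies — guarantees $|\nabla u_\sigma| \geq c > 0$ on $\overline{\Omega}$ for every $\sigma \in \mathcal{B}$.

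With this uniform lower bound in hand, I would observe that every place where the proofs of Lemma~\ref{lemma:LM} and Theorem~\ref{thm:main} invoke $\tau>0$ can instead invoke $|\nabla u_\sigma| \geq c$. Concretely, the coercivity \eqref{eq:LMcouer} and hence the Lax--Milgram conclusion of Lemma~\ref{lemma:LM} now follow from \eqref{eq:posdef} with the ellipticity constant controlled by $c^{p-2}$ rather than $\tau^{p-2}$; the boundedness \eqref{eq:LMbound} and the first-derivative Lipschitz bound for $D\varphi_{p,0}$ persist because \eqref{eq:uniform} keeps $|\nabla u_\sigma|$ bounded above; and the Lipschitz estimate of Lemma~\ref{lemma:perus2} holds for any pair $\sigma, \sigma+\eta \in \mathcal{B}$ even when $p>2$, exactly as in the remark following that lemma, since the first inequality in \eqref{eq:Lip} survives once $|\nabla u_\sigma|$ is bounded away from $0$ on all of $\mathcal{B}$.

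The one genuinely new point — and the step I expect to need the most care — is the third-order Taylor estimate on $D\varphi_{p,0}$ used to control the first term on the right-hand side of \eqref{eq:frest1}. For $\tau=0$ one has $\varphi_{p,0}(\x) = |\x|^p/p$, whose third derivatives grow like $|\x|^{p-3}$ and are therefore unbounded near the origin when $p<3$; this is precisely the obstruction that forced the hypothesis $\tau>0$ in Theorem~\ref{thm:main}, and the estimate \eqref{eq:bound2} is no longer available globally. However, on $\mathcal{B}$ the gradients $\nabla u_\sigma$, $\nabla u_{\sigma+\eta}$ and every point of the segment $[\nabla u_\sigma, \nabla u_{\sigma+\eta}]$ lie in the compact annulus $\{\, c \leq |\x| \leq C_{\mathcal{B}} \,\}$, on which $\varphi_{p,0}$ is smooth and its third derivatives are uniformly bounded; hence one may replace \eqref{eq:bound2} by this local bound, and the Taylor remainder is again of order $|\nabla u_{\sigma+\eta} - \nabla u_\sigma|^2$.

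With these substitutions the remaining computation is word for word the proof of Theorem~\ref{thm:main}, restricted to $\sigma, \sigma+\eta \in \mathcal{B}$ (for fixed $\sigma \in \mathcal{B}$ this holds for all small enough $\eta$ since $\mathcal{B}$ is open). Choosing $v = u_{\sigma+\eta} - u_\sigma - u'_\sigma(\eta)$ in \eqref{eq:frest1} and using the coercivity \eqref{eq:LMcouer} yields $\|u_{\sigma+\eta} - u_\sigma - u'_\sigma(\eta)\|_{W^{1,2}(\Omega)/\R} \leq C \|\eta\|_{L^\infty(\Omega)}^{1+\epsilon}$, which is the asserted Fréchet differentiability on $\mathcal{B}$, with derivative $\eta \mapsto u'_\sigma(\eta)$ given by the unique solution of \eqref{eq:deri}.
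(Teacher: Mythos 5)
Your proposal is correct and follows essentially the same route as the paper's own proof: use Lemma~\ref{cor:perus} to propagate the pointwise lower bound $|\nabla u_\gamma|\geq 2c$ to all $\sigma$ in a small neighborhood, rerun Lemma~\ref{lemma:LM} with $c$ playing the role of $\tau$, invoke the remark after Lemma~\ref{lemma:perus2} for the Lipschitz estimate when $p>2$, and replace the global third-derivative bound \eqref{eq:bound2} by its restriction to a compact annulus avoiding the origin. You merely spell out in detail the two modifications the paper states tersely (including the segment-in-annulus point, which holds since both endpoint gradients lie in the convex $c/2$-ball around $\nabla u_\gamma(x)$ for $\mathcal{B}$ small), so there is no substantive difference.
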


\begin{proof}
First of all, if $\nabla u_\gamma \in \mathcal{C}^\beta(\overline{\Omega})$ does not vanish in $\overline{\Omega}$, then
\begin{equation}
\label{eq:lbound}
\| \nabla u_\sigma \|_{L^\infty(\Omega)} \geq c > 0
\end{equation}
for all conductivities $\sigma$ in some nonempty neighborhood $\mathcal{B}$ of $\gamma$ due to Lemma~\ref{cor:perus}. Hence, \eqref{eq:deri} has a unique solution for all $\sigma \in \mathcal{B}$ since the lower bound \eqref{eq:lbound} makes it possible to carry out the proof  of Lemma~\ref{lemma:LM} without any modification for $\tau=0$.  

The proof of Theorem~\ref{thm:main} also remains valid almost as such for $\tau =0$ whenever \eqref{eq:lbound} holds true. The only needed modifications are referring to the remark succeeding Lemma~\ref{lemma:perus2} instead of Lemma~\ref{lemma:perus2} itself at two occasions and convincing oneself that the singularity of $\varphi_{p,\tau}$ at the origin does not come into play if $\mathcal{B}$ is chosen small enough. 
\end{proof}

The following, second corollary is an easy consequence of the trace theorem for quotient spaces (cf.,~e.g.,~\cite[Lemma~2.7]{Hyvonen04}).

\begin{corollary}
\label{cor:main2}
Under the assumptions of Theorem~\ref{thm:main} (or those of Corollary~\ref{cor:main}), the mapping
$$
\mathcal{B} \ni \sigma \mapsto u_\sigma|_{\partial \Omega} \in W^{1/2,2}(\partial \Omega)/\R
$$
is Fr\'echet differentiable for $\mathcal{B} = \mathcal{C}^{\alpha}(\overline{\Omega}) \cap L^\infty_+(\Omega)$ (or for some nonempty neighborhood $\mathcal{B}$ of $\gamma$ in $\mathcal{C}^{\alpha}(\overline{\Omega}) \cap L^\infty_+(\Omega)$). The corresponding derivative is given by the linear and bounded map
$$
\mathcal{C}^{\alpha}(\overline{\Omega}) 
\ni \eta \mapsto u'_\sigma(\eta)|_{\partial \Omega} \in W^{1/2,2}(\partial \Omega)/\R,
$$
where $u'_\sigma(\eta) \in W^{1,2}(\Omega)/\R$ is the unique solution of \eqref{eq:deri}.
\end{corollary}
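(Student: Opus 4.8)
The plan is to obtain the assertion by composing the already-established Fr\'echet-differentiable forward map from Theorem~\ref{thm:main} (resp.\ Corollary~\ref{cor:main}) with the boundary trace operator, exploiting the elementary fact that post-composition with a bounded linear map preserves Fr\'echet differentiability and transforms the derivative in the obvious way.

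First I would recall that the trace theorem for quotient spaces furnishes a bounded linear operator $T\colon W^{1,2}(\Omega)/\R \to W^{1/2,2}(\partial \Omega)/\R$, $v \mapsto v|_{\partial \Omega}$; see \cite[Lemma~2.7]{Hyvonen04}. The one point genuinely worth checking is that this map is \emph{well defined between the indicated quotient spaces}: adding a constant $c$ to a representative $v$ only shifts its trace by the same $c$, so the induced map on equivalence classes modulo constants is unambiguous and inherits both linearity and boundedness from the usual trace estimate. This quotient bookkeeping in both the domain and the target is the only (mild) obstacle; the corollary is otherwise, as advertised, a soft consequence.

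Next, denote by $F\colon \mathcal{B} \to W^{1,2}(\Omega)/\R$, $\sigma \mapsto u_\sigma$, the forward map, which is Fr\'echet differentiable by Theorem~\ref{thm:main} (or Corollary~\ref{cor:main}) with $DF(\sigma)\eta = u'_\sigma(\eta)$. The mapping appearing in the statement is then precisely $T \circ F$. Since $T$ is bounded and linear, the chain rule for Fr\'echet derivatives yields that $T \circ F$ is Fr\'echet differentiable with $D(T \circ F)(\sigma) = T \circ DF(\sigma)$, namely $\eta \mapsto u'_\sigma(\eta)|_{\partial \Omega}$; this is linear and bounded as a composition of two bounded linear maps, which is exactly the asserted form of the derivative.

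To make the remainder estimate explicit rather than merely invoking the chain rule, one writes
\[
u_{\sigma+\eta}|_{\partial \Omega} - u_\sigma|_{\partial \Omega} - u'_\sigma(\eta)|_{\partial \Omega} = T\big(u_{\sigma+\eta} - u_\sigma - u'_\sigma(\eta)\big)
\]
and bounds the $W^{1/2,2}(\partial \Omega)/\R$-norm of the left-hand side by $\|T\|$ times the $W^{1,2}(\Omega)/\R$-norm of the interior remainder, which is $O(\|\eta\|_{L^\infty(\Omega)}^{1+\epsilon})$ by the proof of Theorem~\ref{thm:main}. Dividing by $\|\eta\|_{L^\infty(\Omega)}$ and letting $\eta \to 0$ then gives the claim directly, with the same $\epsilon>0$ and the boundedness of $T$ providing the only additional ingredients.
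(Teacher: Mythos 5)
Your proposal is correct and matches the paper's argument: the paper likewise derives the corollary as an immediate consequence of the trace theorem for quotient spaces (cf.\ \cite[Lemma~2.7]{Hyvonen04}), i.e.\ by post-composing the Fr\'echet differentiable map of Theorem~\ref{thm:main} (or Corollary~\ref{cor:main}) with the bounded linear trace operator. Your explicit remainder estimate and the check that the trace is well defined on the quotient spaces are just a fleshed-out version of the same one-line argument.
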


Take note that if one considers the differentiation of the solution to \eqref{eq:varplaplace} at $\sigma \in \mathcal{C}^\alpha(\overline{\Omega}) \cap L^\infty_+(\Omega)$ with respect to an additive perturbation $\eta\in \mathcal{C}^\alpha(\overline{\Omega})$ in some power of the conductivity $\upsilon := \sigma^r \in L^\infty(\Omega)$, $0 \not=r \in \R$, or in the log-conductivity $\kappa := \log(\sigma)$, all above conclusions remain valid if \eqref{eq:deri} is replaced by
\begin{equation}
\label{eq:deri_rho}
\int_\Omega \sigma H_{p,\tau}(\nabla u_\sigma) \nabla u'_\upsilon(\eta) \cdot \nabla v \, {\rm d} x 
= - \frac{1}{r} \int_{\partial \Omega} \eta \, \sigma^{1-r} D \varphi_{p,\tau} (\nabla u_\sigma) \cdot \nabla v \, {\rm d} x,
\end{equation}
or
\begin{equation}
\label{eq:deri_kappa}
\int_\Omega \sigma H_{p,\tau}(\nabla u_\sigma) \nabla u'_\kappa(\eta) \cdot \nabla v \, {\rm d} x 
= - \int_{\partial \Omega} \eta \, \sigma D \varphi_{p,\tau} (\nabla u_\sigma) \cdot \nabla v \, {\rm d} x
\end{equation}
respectively. This is a straightforward consequence of the chain rule for Banach spaces. For $\upsilon = \sigma^r$, the choice $r=-1$ corresponds to a parametrization with respect to the resistivity. On the other hand, $r= 1/(1-p) = -q/p$ leads arguably to a natural parametrization because then the solution to \eqref{eq:plaplace} depends linearly on a {\em homogeneous} parameter field $\upsilon$, if $\tau=0$ (cf.~\cite[Example~1]{Hyvonen18}).

We complete this section by a remark that sheds light on the difficulties one encounters if trying to prove Fr\'echet differentiability with respect to $\sigma$ when $\tau=0$ and no extra assumptions on the behavior of $\nabla u_\sigma$ are imposed.

\begin{remark}
If $\tau=0$, $p\not=2$ and $u_\sigma$ has critical points in $\overline{\Omega}$, then the coefficient matrix $H_{p,\tau}(\nabla u_\sigma)$ in \eqref{eq:deri} is either unbounded ($1< p < 2$) or without a positive definite lower bound ($2 < p < \infty$), as indicated by the estimates \eqref{eq:bound} and \eqref{eq:posdef}, respectively. There exists theory for the unique solvability of such degenerate elliptic equations~\cite{Fabes82}, but those results would typically require $|\nabla u_\sigma|^{p-2}$ to lie in a suitable Muckenhoupt class or to behave essentially like an appropriate power of the Jacobian determinant for some quasiconformal map.

Although the distribution and properties of the critical points of a solution to the (weighted) $p$-Laplace equation have been extensively studied {\em in two spatial dimensions} (see,~e.g.,~\cite{Alessandrini87,Alessandrini01,Iwaniec89,Manfredi88,Manfredi91}), the unique solvability of \eqref{eq:deri} for $\tau=0$ in an appropriate weighted Sobolev space does not seem to straightforwardly follow from,~e.g.,~the material in \cite{Fabes82} without further assumptions on the behavior of $|\nabla u_\sigma|$ close to the critical points.  
On the other hand, very little is known about the critical points of solutions to the $p$-Laplace equation in three and higher dimensions \cite{Lindqvist17}.

In addition, even if one succeeded in  proving the unique solvability of \eqref{eq:deri} for $\tau = 0$ and $p\not=2$ (under reasonable further assumptions), the proof of Theorem~\ref{thm:main} would not be valid as such but one would need to extend it in order to cover the needed weighted Sobolev spaces (see,~e.g.,~\cite[Section~2.1]{Fabes82} or \cite{Zhikov98}). As a consequence, we have decided to leave further theoretical considerations regarding the extension of Theorem~\ref{thm:main} to the case $\tau =0$ for future studies. However, most of our numerical experiments in the following section {\em do} successfully tackle the case $\tau = 0$,~i.e.,~the standard weighted $p$-Laplace equation. (Take note that finite element analysis for numerically solving degenerate elliptic boundary value problems with a coefficient in the Muckenhoupt class $A_2$ can be found in \cite{Nochetto16}.)
\end{remark}

\section{Numerical experiments}
\label{sec:numerics}

In this section, we study the numerical feasibility of the inverse boundary value problem for the $p$-Laplacian. We start by explaining how the forward problem \eqref{eq:varplaplace} and the derivative problem \eqref{eq:deri} can be solved numerically. 
Subsequently, we investigate the dependence of the forward solution $u_\sigma$  on $\sigma$ and $p$. To be more precise, we are interested in the error that results from replacing the forward map $\sigma \mapsto u_\sigma|_{\partial \Omega}$ by its linearization around $\sigma_0 \equiv 1$. Based on simulated traces $u_\sigma|_{\partial \Omega}$ for certain boundary current densities, corresponding reconstructions of $\sigma$ are finally sought via regularized least-squares minimization that employs solutions to the derivative problems \eqref{eq:deri}, \eqref{eq:deri_rho}, \eqref{eq:deri_kappa} and is motivated by the Bayesian MAP estimate. More precisely, we consider a one-step linearization approach to the inverse problem and monitor its accuracy for different values of $p$.

\subsection{Forward computations}
\label{sec:forward}

For computational simplicity, we work in two dimensions and choose $\Omega \subset \R^2$ to be the unit disk. The {\em finite element method} (FEM) with piecewise linear basis functions is employed to numerically solve the nonlinear variational problem \eqref{eq:varplaplace}. To this end, the unit disk is approximately divided into $55{,}000$ triangles that form a regular mesh with about $28{,}000$ nodes, of which $512$ are uniformly spaced along the boundary $\partial \Omega$. In the following, we implicitly assume that the inherent nonuniqueness in the considered Neumann problems \eqref{eq:varplaplace} and \eqref{eq:deri}, reflected in the use of quotient spaces in Section~\ref{sec:setting} and \ref{sec:Frechet}, is handled in some reasonable way,~e.g.,~ by fixing the value of the corresponding solutions at one node.

The conductivity $\sigma$ is discretized with $960$ non-overlapping subdomains $\Omega_i \subset \Omega$ whose closures cover the whole disk and are approximately equal in size. In each subset, $\sigma$ is constant. As a result, a discretized conductivity $\sigma$ is characterized by a vector in $\R_+^{960}$, that is,
\begin{equation}
\label{eq:pw_sigma}
\sigma = \sum_{i=1}^{960} \sigma_i \chi_i,
\end{equation}
where $\chi_i$ is the characteristic function of $\Omega_i$. In what follows, we abuse the notation by identifying a piecewise constant conductivity $\sigma \in L^\infty_+(\Omega)$ of the form \eqref{eq:pw_sigma} with the corresponding coefficient vector $\sigma \in \R_+^{960}$.

If $p=2$, the forward problem \eqref{eq:varplaplace} is linear and the solution of the associated discrete FEM problem is easily found for any mean-free boundary current density $f$ by solving a finite-dimensional linear system. For a general $1 < p < \infty$, we use the solution for the corresponding linear case as an initial guess and perform a Newton iteration to find a numerical solution to the nonlinear discrete system; see,~e.g.~\cite{Barrett93,Carstensen03,Diening08} for more information on numerically solving $p$-Laplace type equations by FEM. On the other hand, given a (discrete) forward solution $u_\sigma$ and a perturbation $\eta \in L^\infty(\Omega)$, an application of FEM to the derivative problem \eqref{eq:deri} results in a linear system for any $1 < p < \infty$. If $\tau > 0$, Lemma~\ref{lemma:LM} guarantees that this finite-dimensional system is uniquely and stably solvable, but we have not either encountered any severe numerical instabilities when solving the system for $\tau=0$ in the considered simple two-dimensional geometry.

In practice, when one tries to reconstruct $\sigma$ based on boundary measurements of $u$, one often has access to measurements for several boundary current densities $f$, i.e., for several right-hand sides in \eqref{eq:varplaplace} or in the corresponding discrete problem. Choosing the `best' densities is a task of optimal experimental design; see,~e.g.,~\cite{Huan13}. If $p=2$, the solution $u$ depends linearly on $f$ and thus linearly dependent densities do not yield any additional information on $\sigma$, at least in theory when the effect of measurement noise is not taken into account.
For general $p$, however, the situation is more complicated and the optimal setting may include linearly dependent current densities as well.
Here, we simply choose the first sixteen zero-mean trigonometric boundary currents
\begin{equation}
\label{eq:currents}
f \in \left\{ \cos(j \theta), \ \sin(j \theta) : j=1,\ldots,8 \right\},
\end{equation}
where $\theta$ is the angular coordinate on the unit circle $\partial \Omega$. The simulated boundary traces for $u_\sigma$ are $L^2(\partial \Omega)$-orthogonally projected onto the same zero-mean trigonometric basis, which is convenient since the constant component in $u_\sigma$ is not uniquely defined due to the Neumann boundary condition in \eqref{eq:varplaplace}. As a consequence, the total boundary measurement for a given triplet $(\sigma, p, \tau) \in \R^{960}_+ \times (1,\infty) \times \R_+\cup \{0\}$ can be represented as a vector $U = U(\sigma,p,\tau) \in \R^{256}$.

When computing derivatives of the solution $u$ with respect to the conductivity, we consider all $960$ elementary perturbations $\eta = \chi_i \in L^\infty(\Omega)$ of $\sigma$, supported on the subsets $\Omega_i$, $i=1, \dots, 960$, respectively.  The traces of the corresponding FEM approximations for \eqref{eq:deri} are then projected onto the aforementioned trigonometric basis. Consequently, the discretized derivatives for all sixteen  boundary current densities at $\sigma_0 \equiv 1$ can be expressed as a Jacobian matrix $J \in \R^{256 \times 960}$. Take note that Theorem~\ref{thm:main} does not actually guarantee that the unique solution to \eqref{eq:deri} with $\eta = \chi_i$ represents a derivative of the corresponding solution of \eqref{eq:varplaplace} since $\chi_i$ obviously does not belong to $\mathcal{C}^{\alpha}(\overline{\Omega})$ for any $\alpha >0$.\footnote{It is worth noting that if $p=2$, it is well known the Fr\'echet derivative with respect to the conductivity is given by \eqref{eq:deri} for any $\tau \geq 0$ and $\eta\in L^\infty(\Omega)$; cf.~e.g.,~\cite{Hyvonen18}.} However, the numerical experiments presented in the following section suggest that this is anyway the case. 

\subsection{Linearized forward map}
\label{sec:linforward}
In addition to the standard version $U: \R_+^{960} \to \R^{256}$, we also consider a parametrization of the discretized forward map with respect to a power $r \in \R$ of the conductivity, that is, 
\begin{equation}
\label{eq:power}
\R_+^{960} \ni \upsilon \mapsto U(\upsilon^{1/r}) =: U_{\rm pwr}(\upsilon,r) \in \R^{256},
\end{equation}
as well as a parametrization employing the log-conductivity $\kappa = \log(\sigma)$,
$$
\R^{960} \ni \kappa \mapsto U({\rm e}^\kappa) =: U_{\exp}(\kappa) \in \R^{256}.
$$
Here and it what follows, the dependence of the boundary measurement vector $U$ on $p$ and $\tau$ is suppressed and algebraic operations on coefficient vectors such as $\sigma$, $\upsilon$ and $\kappa$ are to be understood componentwise or through the associated piecewise constant representations of the form \eqref{eq:pw_sigma}. Moreover, we are actually only interested in two special choices for $r$ in \eqref{eq:power}, namely
$$
U_{\rm inv} := U_{\rm pwr}(\, \cdot \, , -1) \qquad {\rm and} \qquad
U_{\rm nat} := U_{\rm pwr}\big(\, \cdot \, , -q/p\big).
$$
These are the parametrizations with respect to the resistivity and the `natural power' of $\sigma$, respectively; see the discussion succeeding \eqref{eq:deri_kappa}. We denote the corresponding parameter vectors by $\rho = 1/\sigma$ and $\mu = \sigma^{-q/p}$.

Recall that the variational derivative problem associated to $U_{\rm pwr}: \R_+^{960} \to \R^{256}$ is \eqref{eq:deri_rho} and that associated to $U_{\rm exp}: \R^{960} \to \R^{256}$ is \eqref{eq:deri_kappa}. In other words, derivatives of the above introduced new parametrizations for the forward map can be estimated by solving \eqref{eq:deri_rho} or \eqref{eq:deri_kappa} in a similar manner as those of the original $U: \R_+^{960} \to \R^{256}$ can be produced by solving \eqref{eq:deri}. We denote the Jacobian matrices of $U_{\rm inv}$, $U_{\rm nat}$ and  $U_{\rm exp}$ evaluated at $\rho_0 := 1/\sigma_0 \equiv 1$, $\mu_0 := \sigma_0^{-q/p} \equiv 1$ and $\kappa_0 := \log(\sigma_0) \equiv 0$ by $J_{\rm inv}, J_{\rm nat}, J_{\rm exp} \in \R^{256 \times 960}$, respectively. 

Our aim is to statistically test the accuracy of the linearizations of $U$, $U_{\rm inv}$, $U_{\rm nat}$ and $U_{\rm exp}$ as functions of $p$ around $\sigma_0$, $\rho_0$, $\mu_0$ and $\kappa_0$, respectively. Bear in mind that $\sigma_0$, $\rho_0$, $\mu_0$ and $\kappa_0$ all define the same homogeneous coefficient in \eqref{eq:varplaplace}, so we are essentially comparing four different ways of linearizing the same forward map. To this end, we define a discrete random log-conductivity field on $\Omega$ by letting its components follow a $960$-dimensional Gaussian distribution with vanishing mean and a covariance matrix with entries
\begin{equation}
\label{eq:covmat}
\Sigma_{ij} = \varsigma^2 \exp\mathopen{}\left( -\frac{\lVert \hat{x}_i-\hat{x}_j \rVert_2^2}{2b^2} \right)\mathclose{}, \qquad i,j=1,\ldots,960.
\end{equation}
Here $\varsigma^2$ is the pointwise variance, $b>0$ specifies the correlation length in $\Omega$, $\hat{x}_i$ denotes the center of $\Omega_i$ and $\|  \cdot  \|_2$ is the Euclidean norm. We draw four log-conductivity samples, with $1{,}000$ members each, using the parameter values
\begin{itemize}
\item[(A)] $\varsigma^2 = 1/4$ and $b=1/3$, 
\item[(B)] $\varsigma^2 = 1/4$ and  $b=2/3$, 
\item[(C)] $\varsigma^2 = 1$ and  $b=1/3$,
\item[(D)] $\varsigma^2 = 1$ and  $b=2/3$
\end{itemize}
respectively, in \eqref{eq:covmat}. In what follows, $\mathbb{E}[\cdot]$ denotes the sample mean operator with respect to a generic log-conductivity sample.

We define the mean relative linearization error for the standard discrete forward map $U: \R_+^{960} \to \R^{256}$ via
\begin{equation}
\label{eq:error}
e_{{\rm std}} = \, \mathbb{E} \! \left[\frac{\big\| U(\sigma) - \big( U (\sigma_0) + J (\sigma - \sigma_0) \big) \big\|_2}{\| U(\sigma) \|_2} \right],
\end{equation}
where the conductivity is given by $\sigma = {\rm e}^{\kappa}$ as $\kappa$ runs through a log-conductivity sample (either A, B, C or D). Take note that $e_{\rm std} = e_{\rm std}(p,\tau)$ is still a function of the parameter pair $(p, \tau)$ as well as of the considered log-conductivity sample. 
The mean relative linearization errors $e_{\rm inv}$, $e_{\rm nat}$ and $e_{\rm exp}$ are defined analogously, that is, by using the appropriate forward map ($U_{\rm inv}$, $U_{\rm nat}$ or $U_{\rm exp}$), its Jacobian and the correct base point for the linearization on the right-hand side of \eqref{eq:error} with the sample variable being defined as $\rho = {\rm e}^{-\kappa}$, $\mu = {\rm e}^{- q \kappa/p}$ or simply as $\kappa$ itself. Due to Parseval's identity, an error indicator of the type \eqref{eq:error} can be interpreted as an approximation for the mean relative $L^2(\partial \Omega)$ linearization error in the boundary potentials induced by the input current densities \eqref{eq:currents}.

The relative linearization errors are illustrated in Figure~\ref{fig:forwerr}. More precisely, $e_{\rm std}(p, \tau)$, $e_{\rm inv}(p, \tau)$, $e_{\rm nat}(p, \tau)$ and $e_{\rm exp}(p, \tau)$ are plotted as functions of $p \in [3/2,3]$ for the four samples A, B, C and D as well as two values for the smoothening parameter, namely $\tau = 0$ and $\tau=0.1$. The choice of a {\em small} $\tau \geq 0$ does not seem to have any significant impact on the plots in Figure~\ref{fig:forwerr}; in fact, switching between $\tau=0$ and $\tau=0.1$ leads to nearly overlapping plots for all log-conductivity samples and linearizations. Hence, the effect of $\tau$ is not further discussed in the following.

\begin{figure}
\center{\includegraphics[scale=1]{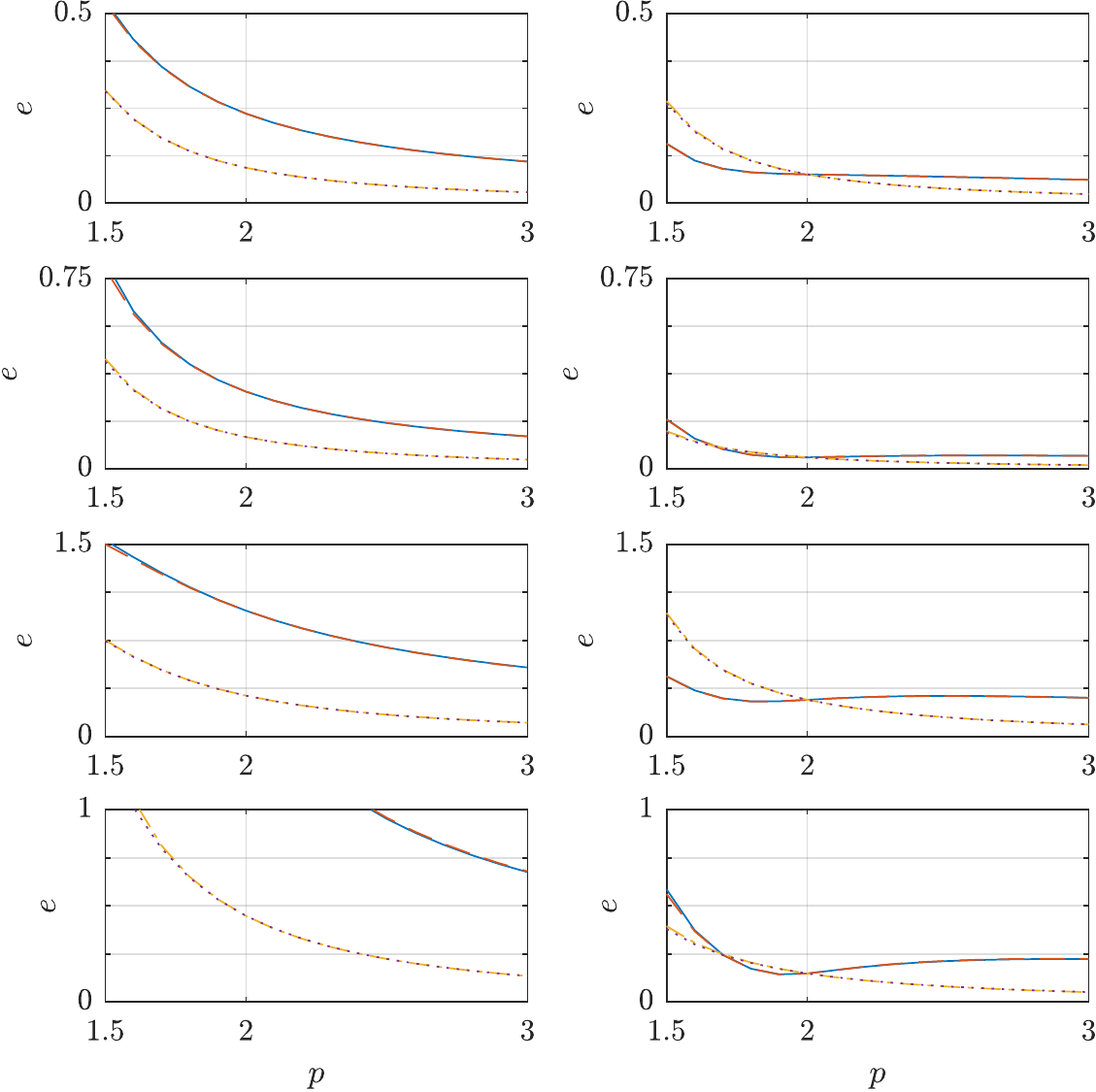}}
\caption{Mean relative linearization errors for the four parametrizations of the forward operator as functions of $p \in [3/2, 3]$. The rows correspond to the samples A, B, C and D from top to bottom. Left column: $e_{\rm std}$ and $\tau = 0$ (solid), $e_{\rm std}$ and $\tau = 0.1$ (dashed), $e_{\rm exp}$ and $\tau = 0$ (dash-dotted), $e_{\rm exp}$ and $\tau = 0.1$ (dotted). Right column: $e_{\rm inv}$ and $\tau = 0$ (solid), $e_{\rm inv}$ and $\tau = 0.1$ (dashed), $e_{\rm nat}$ and $\tau = 0$ (dash-dotted), $e_{\rm nat}$ and $\tau = 0.1$ (dotted).} 
\label{fig:forwerr}
\end{figure}

Apart from $e_{\rm inv}$ for the samples C and D, the linearization errors depicted in Figure~\ref{fig:forwerr} are (almost) monotonically decreasing in $p \in [3/2,3]$, implying that the corresponding (discrete) forward operators become more linear as $p$ increases. On the other hand, the graphs of $e_{\rm inv}$ on the bottom two rows of Figure~\ref{fig:forwerr} suggest that the `highest level of linearity' seems to occur slightly left of $p=2$ for the resistivity parametrization and the samples C and D with the larger pointwise variance $\varsigma^2 = 1$. 

The linearization with respect to the conductivity is the least accurate for all examined values of $p$ and all four log-conductivity samples; in fact, its performance is intolerably bad for the sample C and especially for D. For the samples A and C with the shorter correlation length $b=1/3$, the linearizations with respect to the resistivity, log-conductivity and the natural parameter $\mu = \sigma^{-q/p}$ are comparable in accuracy. To be more precise, the resistivity linearization is the most reliable technique for $p \in [3/2,2)$ whereas $e_{\rm nat}$ and $e_{\rm exp}$ attain smaller values than  $e_{\rm inv}$ for $p \in (2,3]$, with the `natural linearization' being the most accurate method for large $p$.
For the samples B and D with the longer correlation length $b=2/3$, the linearization with respect to $\mu$ is the most accurate for almost all $p \in [3/2,3]$ and the one with respect to the resistivity seems to also function reliably. On the other hand, the linearization with respect to the log-conductivity is reasonably accurate for the sample B with the smaller pointwise variance $\varsigma^2 = 1/4$, but with C corresponding to $\varsigma^2 = 1$ its performance deteriorates when $p$ approaches the lower limit of $3/2$.

\subsection{Linearized inverse problem}
\label{sec:inverse}

In this section, we finally tackle the inverse boundary value problem corresponding to \eqref{eq:varplaplace}. Encouraged by the numerical tests of the preceding section, we only consider the case $\tau=0$, and we concentrate mainly on the forward map $U_{\rm exp}: \R^{960} \to \R^{256}$. The reason for the latter is two-fold: First of all, the logarithmic parametrization for the conductivity ensures that the obtained reconstructions are (physically) meaningful without any further restrictions on the optimization process, that is, $\sigma = {\rm e}^\kappa$ is a positive conductivity for any $\kappa \in \R^{960}$. Secondly, of the three discrete forward operators, $U_{\rm exp}$ leads to the simplest form for the Tikhonov functional that defines the MAP estimates because the argument of $U_{\rm exp}$,~i.e.~the log-conductivity, is the variable that follows a Gaussian prior probability distribution in the considered setting. 

However, we also compare the `inverse accuracies' of all four linearization techniques introduced in the previous section for two new log-conductivity samples drawn from moderate zero-mean Gaussian distributions defined by the covariance matrix \eqref{eq:covmat} with the parameter pairs
\begin{itemize}
\item[(E)] $\varsigma^2 = 1/100$ and $b=1/3$,
\item[(F)] $\varsigma^2 = 1/100$ and $b=2/3$.
\end{itemize}
If the log-conductivity $\kappa$ follows a Gaussian distribution with such a small variance, then the log-normal distributions for $\sigma = {\rm e}^\kappa$, $\rho = {\rm e}^{-\kappa}$ and $\mu =  {\rm e}^{- q\kappa/p} \in \R^{960}$ can be approximated relatively well with Gaussian distributions having the same means and covariance matrices as the to-be-approximated log-normal ones. This allows a relatively fair comparison between the four to-be-introduced one-step reconstruction methods that are based on the four parametrizations of the forward map introduced in Section~\ref{sec:linforward}.

To simulate data for the inverse problem, we introduce noisy boundary measurements corresponding to a given log-conductivity $\kappa \in \R^{960}$ via
\begin{equation}
\label{eq:data}
V(\kappa,\omega) = U_{\rm exp}(\kappa) + \omega,
\end{equation}
where the components of $\omega \in \R^{256}$ are independent realizations of a Gaussian random variable with vanishing mean and standard deviation $\lambda>0$. We then try to reproduce (an approximation for) $\kappa$ by defining 
\begin{equation}
\label{eq:tikho}
\kappa_{\rm reco}(\kappa,\omega) = \argmin_{\widetilde{\kappa} \in \R^{960}} \left\{ \big\lVert V(\kappa,\omega) - \big( U_{\rm exp}(\kappa_0) + J_{\rm exp}(\widetilde{\kappa} - \kappa_0)\big) \big\rVert_2^2 + \lambda^2 \widetilde{\kappa}^\mathrm{T} \Sigma^{-1} \widetilde{\kappa} \right\},
\end{equation}
where $\Sigma$ is the covariance matrix defined by \eqref{eq:covmat} with appropriate choices for $\varsigma^2$ and $b$. If $U_{\rm exp}(\kappa_0) + J_{\rm exp}(\widetilde{\kappa} - \kappa_0)$ were replaced by the nonlinear forward operator $U_{\rm exp}(\widetilde{\kappa})$ itself in \eqref{eq:tikho}, a corresponding minimizer would be a MAP estimate for the discrete log-conductivity, assuming $\Sigma$ is the covariance matrix of a zero-mean Gaussian prior probability distribution for $\kappa$; see,~e.g.,~\cite{Kaipio04a}. In other words, $\kappa_{\rm reco}$ defined by \eqref{eq:tikho} is a one-step linearization approximation for a MAP estimate. Computing the minimizer in \eqref{eq:tikho} is trivial on modern computers as the involved matrices are relatively small.  It should also be mentioned that $U_{\rm exp}(\kappa_0)$ and $J_{\rm exp} = J_{\rm exp}(\kappa_0)$ are computed on a slightly different FEM mesh compared to that used for simulating the data $V$ to make sure no inverse crimes are committed.

When one-step MAP-motivated reconstructions corresponding to some other parametrization of the forward operator are considered, the employed linearization on the right-hand side of \eqref{eq:tikho} naturally deals with the investigated parametrization. Moreover, the covariance $\Sigma$ and the implicitly included zero-mean of the prior Gaussian density for $\kappa$ are replaced in the penalty term of \eqref{eq:tikho} by the covariance matrix and expectation value of the prior log-normal density for the investigated parameter. As an example, for the standard forward operator $U: \R_+^{960} \to \R^{256}$, \eqref{eq:tikho} is replaced by
\begin{align}
\label{eq:tikho2}
\sigma_{\rm reco}(\kappa,\omega) = \argmin_{\widetilde{\sigma} \in \R^{960}} \Big\{ \big\lVert V(\kappa,\omega) - \big( U(\sigma_0) &+ J(\widetilde{\sigma} - \sigma_0)\big) \big\rVert_2^2 \nonumber
\\ 
&+ \lambda^2 \big(\widetilde{\sigma} - \bar{\sigma}\big) ^\mathrm{T} \Sigma_{\sigma}^{-1} \big(\widetilde{\sigma} - \bar{\sigma}\big) \Big\}, 
\end{align}
where $\bar{\sigma} \in \R^{960}$ and $\Sigma_{\sigma} \in \R^{960 \times 960}$ are the mean and the covariance, respectively, of the log-normal prior density induced for $\sigma = {\rm e}^\kappa$ when $\kappa$ follows a priori a given zero-mean normal distribution. The estimators $\rho_{\rm reco}(\kappa,\omega)$ and $\mu_{\rm reco}(\kappa,\omega)$ corresponding to the resistivity and the natural parametrization are defined in an analogous manner. In particular, notice that the mean $\bar{\mu}$ and the covariance matrix $\Sigma_\mu$ for the natural parametrization depend on $p$ as does the relation $\mu = {\rm e}^{-q \kappa/p}$.

We test the accuracy of the above introduced simple one-step reconstruction algorithm for the inverse boundary value problem associated to \eqref{eq:varplaplace} by investigating the mean reconstruction error
\begin{equation}
\label{eq:inverror}
\iota_{\rm exp}(p) = \sqrt{\frac{\pi}{960}} \,\mathbb{E} \big[\| \kappa - \kappa_{\rm reco} \|_2\big],
\end{equation}
where the target log-conductivities run through one of the simulated samples. The corresponding error indicators for the other three parametrizations,~i.e.~$\iota_{\rm std}$, $\iota_{\rm inv}$ and $\iota_{\rm nat}$, are defined via replacing $\kappa_{\rm reco}$ in \eqref{eq:inverror} by $\log( \sigma_{\rm reco})$, $-\log( \rho_{\rm reco})$ and $(1-p) \log( \mu_{\rm reco})$, respectively. In particular, the mean reconstruction error is always measured in log-conductivity. 

All reconstructions are computed as indicated by \eqref{eq:tikho} or \eqref{eq:tikho2} with the needed covariance matrices formed using \eqref{eq:covmat} and the induced mean and covariance formulae for the log-normal distributions of $\sigma$, $\rho$ and $\mu$. The pointwise variance and correlation length needed for \eqref{eq:covmat} are the ones used when drawing the log-conductivities to which the sample mean in \eqref{eq:inverror} refers, that is, our prior information on the log-conductivity is accurate.
Because the subdomains defining the discretization for the conductivity, i.e.~$\Omega_i$, $i=1, \dots, 960$, are approximately of the same size, the reconstruction error indicators essentially measure the $L^2(\Omega)$ reconstruction error. Observe that the sample mean in \eqref{eq:inverror} also averages over the measurement noise since the additive noise vectors in \eqref{eq:data} are drawn independently for different log-conductivities in the considered samples (but they are the same for different values of $p$ to ensure fair comparison between different parameter values).

Figure \ref{fig:invcomp} compares the mean reconstruction errors $\iota_{\rm std}$, $\iota_{\rm inv}$, $\iota_{\rm nat}$ and $\iota_{\rm exp}$ for the samples E and F. Because the deviation of the considered log-conductivity samples from their mean is relatively small, the standard deviation of noise is also chosen to be rather small, i.e.~$\lambda = 10^{-3}$, to allow a reasonable signal-to-noise ratio. The largest values in the data vectors $U_{\rm exp}(\kappa)$ are of order one (and exactly one for all $p$ if $\kappa = \kappa_0 \equiv0$), so the noise level roughly corresponds to $0.1\%$ of the maximum value in $U_{\rm exp}(\kappa)$. However, it is considerably higher compared to the relative measurement $U_{\rm exp}(\kappa) - U_{\rm exp}(\kappa_0)$. Although the one-step reconstruction method for $\kappa$ takes correctly into account the prior information on the parameter field in \eqref{eq:varplaplace}, the mean reconstruction errors for the resistivity and natural parametrizations are in many cases smaller, especially for small $p$. The standard conductivity parametrization performs the worst for both samples and all $p \in [3/2,3]$. Not so surprisingly, the longer correlation length in the sample F allows on average more accurate reconstructions. All in all, the value of $p$ does not seem to have a significant effect on the average quality of the reconstructions for the considered log-conductivity samples with a small pointwise variance.

\begin{figure}
\center{\includegraphics[scale=1]{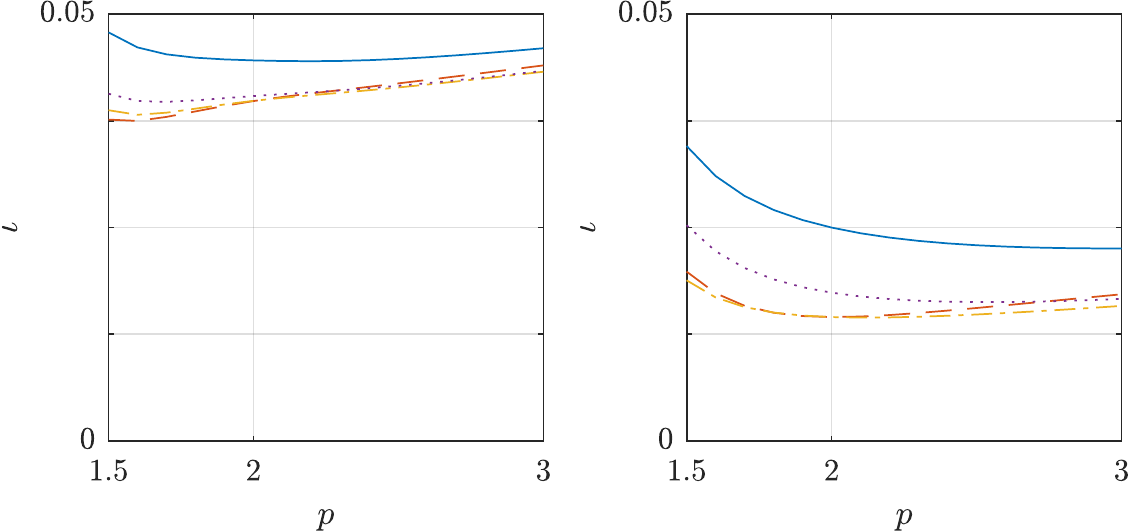}}
\caption{Mean one-step reconstruction errors $\iota_{\rm std}$ (solid), $\iota_{\rm inv}$ (dashed), $\iota_{\rm nat}$ (dash-dotted) and $\iota_{\rm exp}$ (dotted) as functions of $p \in [3/2,3]$ for $\tau=0$ and $\lambda = 10^{-3}$. Left: sample E. Right: sample~F.  The average (approximate) $L^2(\Omega)$ norm $\sqrt{\pi/960} \, \mathbb{E} [\| \kappa \|_2]$ equals $0.174$ and $0.167$ for the samples E and F, respectively.}
\label{fig:invcomp}
\end{figure}

Next we consider log-conductivity models with a larger pointwise variance. This renders the direct log-conductivity reconstruction given by \eqref{eq:tikho} the only reasonable choice, at least if the reconstruction error is still measured in log-conductivity. Figure \ref{fig:inverr} shows the mean reconstruction error $\iota_{\rm exp}$ as a function of $p \in [2/3,3]$ for the samples A and B with the pointwise variance $\varsigma^2 = 1/4$. As the considered log-conductivity samples deviate now considerably from their mean, we test a higher standard deviation for the measurement noise, $\lambda = 10^{-2}$. Recall that such a noise level approximately corresponds to $1\%$ of the maximum value in the data vectors $U_{\rm exp}(\kappa)$. Figure \ref{fig:inverr} also presents the same errors when the reconstructions of the sample log-conductivities are computed by replacing $U_{\rm exp}(\kappa_0) = U_{\rm exp}(\kappa_0, p)$ and $J_{\rm exp} = J_{\rm exp}(\kappa_0, p)$ in \eqref{eq:tikho} by $U_{\rm exp}(\kappa_0, 2)$ and $J_{\rm exp}(\kappa_0, 2)$, respectively. In other words, the measurement data are simulated using a continuum  of values for $p$, but the reconstructions are computed by choosing $p=2$ independently of its `true' value. This could be the case, for example, when one has a measurement vector available, but does not know the precise value of $p$ due to,~e.g.,~uncertainty about the correct forward model for the investigated physical phenomenon.

\begin{figure}
\center{\includegraphics[scale=1]{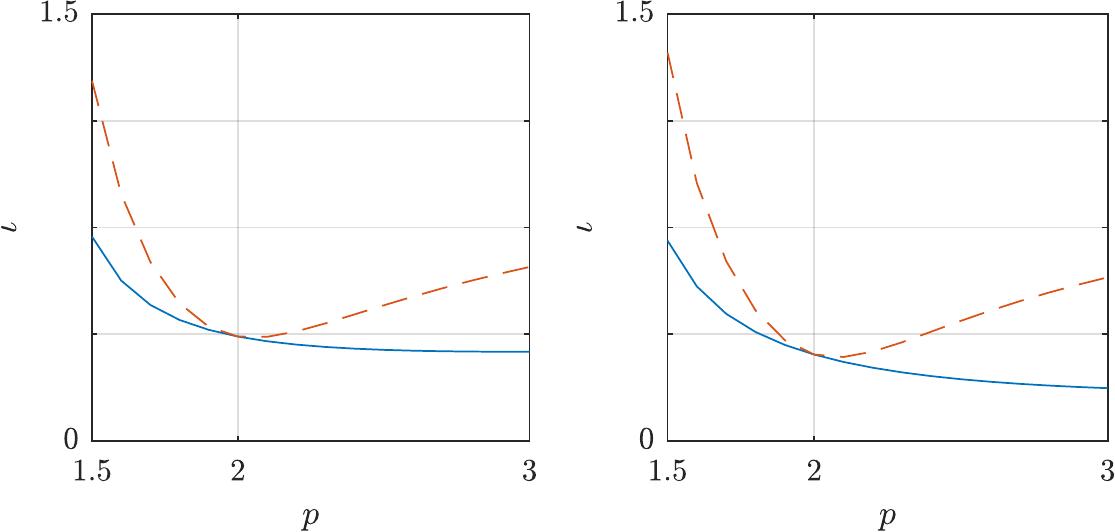}}
\caption{Mean one-step reconstruction error $\iota_{\rm exp}$ (solid) and the corresponding mean reconstruction error obtained by misusing $p=2$ in \eqref{eq:tikho} (dashed) as functions of $p \in [3/2,3]$  for $\tau=0$ and $\lambda = 10^{-2}$. Left: sample A. Right: sample B. The average (approximate) $L^2(\Omega)$ norm $\sqrt{\pi/960} \, \mathbb{E} [\| \kappa \|_2]$ equals $0.86$ and $0.84$  for the samples A and B, respectively.}
\label{fig:inverr}
\end{figure}

According to Figure~\ref{fig:inverr},  the mean reconstruction error $\iota_{\rm exp}(p)$ is monotonically decreasing in $p \in [2/3,3]$ for both log-conductivity samples, which is in line with the material in Section~\ref{sec:linforward}. Forming the reconstructions by fixing $p=2$ in \eqref{eq:tikho} produces, not so surprisingly, only slightly larger reconstruction errors close to $p=2$, but the relative performance of such a simplified approach degenerates the further one moves away from $p=2$. However, it is worth noting that fixing $p=2$ in \eqref{eq:tikho} anyway leads to smaller mean reconstruction errors for $p \in [2, 3]$ than the use of the correct $p$ does for many values in the interval $[3/2,2)$. 
According to our experience, the mean reconstruction errors could be decreased for all $p \in [2/3,3]$ by placing more weight on the penalty term in \eqref{eq:tikho}: The exact forward model is replaced by a linearization in \eqref{eq:tikho} and one can often successfully compensate for the resulting numerical error by increasing the assumed level of measurement noise (cf.~\cite{Hyvonen18}).

To conclude the numerical experiments, Figures~\ref{fig:recoA} and \ref{fig:recoB} show three example log-conductivities from the samples A and B, respectively, as well as the corresponding reconstructions produced by \eqref{eq:tikho} for $p=2/3$, $p=2$ and $p=3$ with a low noise level  $\lambda=10^{-3}$. When $p=3/2$, the reconstructions often contain log-conductivity values that are either significantly too large or too small. This overshoot phenomenon is clearly visible on the second rows of Figures~\ref{fig:recoA} and \ref{fig:recoB}, and it probably explains the bad average performance of the one-step reconstruction algorithm for small $p$ documented in Figure~\ref{fig:inverr}.  On the other hand, having $p=3$ results in the most accurate reconstructions in all shown examples, though the difference between the cases $p=2$ and $p=3$ is relatively small.

\begin{figure}
\center{\includegraphics[scale=1]{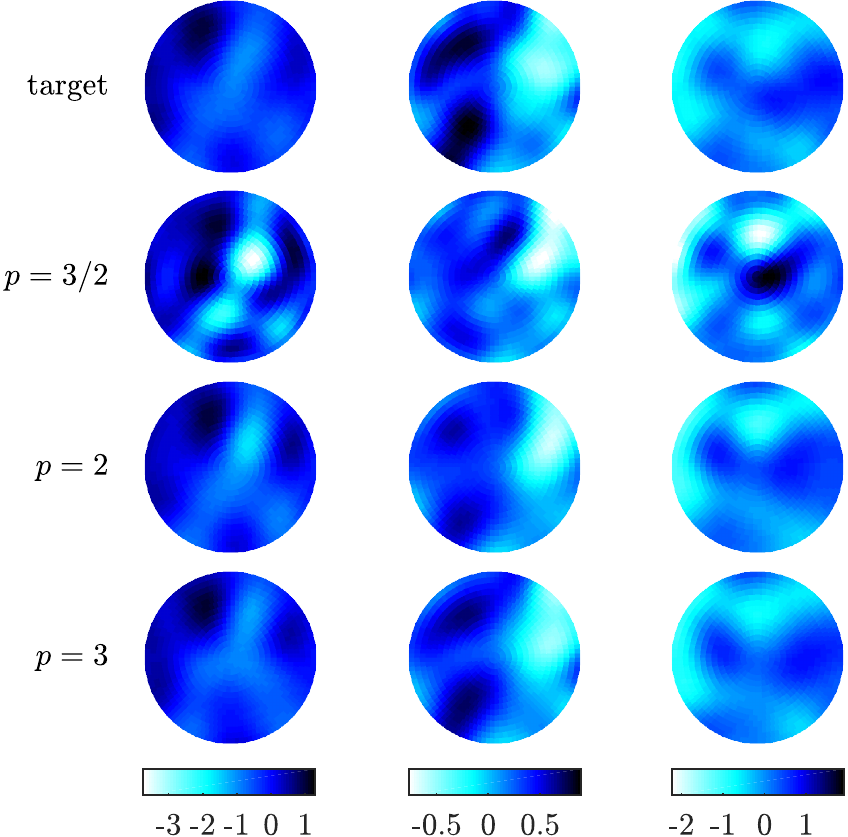}}
\caption{Example reconstructions for the sample A and $\lambda=10^{-3}$. Top row: three target log-conductivities. Second row: reconstructions with $p=3/2$. Third row: reconstructions with $p=2$. Fourth row: reconstructions with $p=3$.}
\label{fig:recoA}
\end{figure}

\begin{figure}
\center{\includegraphics[scale=1]{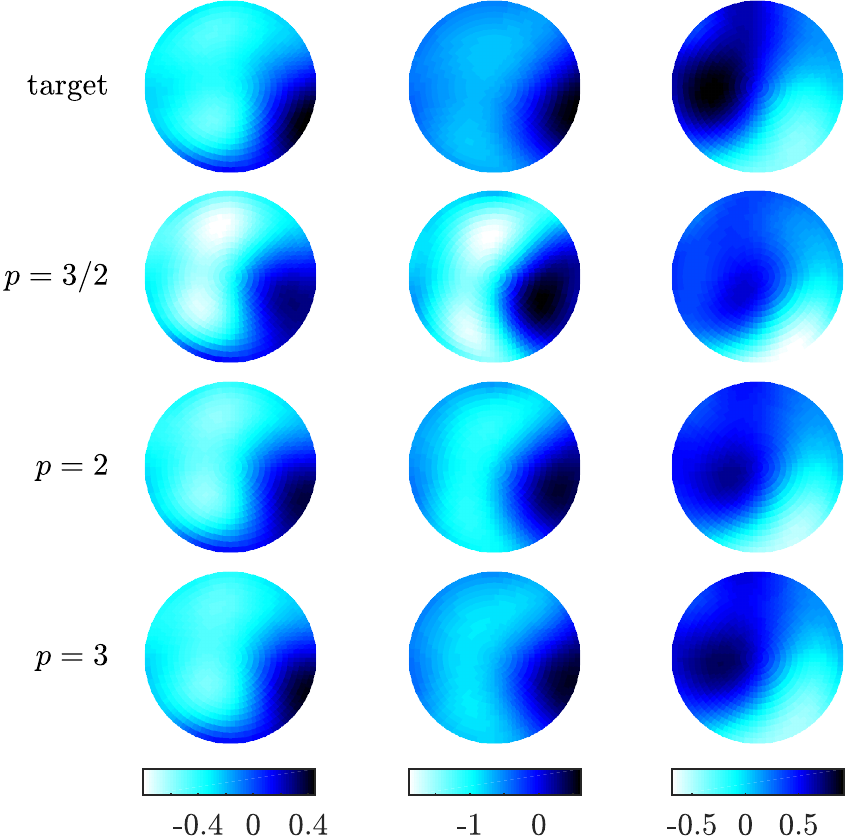}}
\caption{Example reconstructions for the sample B and $\lambda=10^{-3}$. Top row: three target log-conductivities. Second row: reconstructions with $p=3/2$. Third row: reconstructions with $p=2$. Fourth row: reconstructions with $p=3$.}
\label{fig:recoB}
\end{figure}

\section{Concluding remarks}
\label{sec:conclusion}

We have tackled an inverse boundary value problem for a family of $p$-Laplace type nonlinear elliptic partial differential equations by taking a straightforward linearization approach. In particular, the Fr\'echet differentiability of the forward operator, which maps a H\"older continuous conductivity coefficient to the solution of a Neumann problem, was established, excluding the degenerate case $\tau=0$ that corresponds to the classical (weighted) $p$-Laplacian. According to our numerical studies, the considered one-step inversion algorithm produces approximately as good, or even slightly better reconstructions for $p>2$ than it does in the case $p=2$ that corresponds to the extensively studied inverse conductivity problem. However, the accuracy of the reconstruction algorithm deteriorates for $p<2$. These conclusions hold even for $\tau=0$, but they are probably conditional to the chosen parametrization for the conductivity. Indeed, our numerical studies on the linearization error associated to the forward operator hint the conductivity parametrization has a significant effect on the nonlinearity of the considered inverse boundary value problem.

\appendix
\section{Basic properties of \texorpdfstring{$\varphi_{p,\tau}$}{phi\_\{p,tau\}}}\label{app:A}
Let $1<p<\infty$, $\tau\geq0$ and recall $\varphi_{p,\tau}$  from \eqref{eq:loc_energy} as well as its gradient \eqref{eq:varphi}. Observe that the corresponding Hessian is
\begin{equation}
\label{eq:Hessian}
H_{p,\tau}(\x) = (\tau^2 + |\x|^{2})^{\frac{p-2}{2}} I + (p-2) (\tau^2 + |\x|^2)^{\frac{p-4}{2}} \x \, \x^{\rm T},
\end{equation}
where $I \in \R^{n \times n}$ is the identity matrix.
The Hessian satisfies
\begin{align}
\label{eq:posdef}
\y^{\rm T} H_{p,\tau}(\x) \, \y &= (\tau^2 + |\x|^2)^{\frac{p-4}{2}}\big( (\tau^2 + |\x|^2) |\y|^2 + (p-2) (\x \cdot \y)^2\big) \nonumber \\[1mm]
&\geq (\tau^2 + |\x|^2)^{\frac{p-4}{2}} \big( \tau^2 + \min \{p-1,1\} |\x|^2 \big) |\y|^2 ,
\end{align}
i.e., it is uniformly positive definite with respect to $\x$ in any bounded set of $\R^n$ for a fixed $\tau > 0$. In particular, $\varphi_{p,\tau}$ is strictly convex for any $1 < p < \infty$ and $\tau \geq 0$. By choosing either $\y = \x$ or $\y\perp \x$, it is also easy to check that the spectral matrix norm satisfies
\begin{equation}
\label{eq:bound}
\| H_{p,\tau}(\x) \|_2 \leq \max\{1, p-1\} (\tau^2 + |\x|^2)^{\frac{p-2}{2}}.
\end{equation}
Furthermore, a straightforward calculation gives
\begin{equation}
\label{eq:bound2}
\max_{1 \leq j,k,l \leq n} \left| \frac{\partial^3 \varphi_{p,\tau}}{\partial \x_j \partial \x_k \partial \x_l}(\x) \right| \leq C(p) (\tau^2 + |\x|^2)^{\frac{p-3}{2}} 
\end{equation}
for all $\x \in \R^n$.

Since the graph of a convex function lies above its tangent plane, it holds that
\begin{equation}
\label{eq:convex}
\varphi_{p,\tau}(\y) \geq \varphi_{p,\tau}(\x) + D \varphi_{p,\tau}(\x) \cdot (\y-\x)
\end{equation}
for all $\x, \y \in \R$. 
We also need the inequalities
\begin{equation}
\label{eq:basic_ineq}
\big( \tau^2 + |\x|^2 + |\y|^2\big)^{\frac{p-2}{2}} | \x - \y |^2 \leq C \big( D \varphi_{p,\tau}(\x) - D \varphi_{p,\tau}(\y)\big) \cdot (\x - \y),
\end{equation}
and
\begin{equation}
\label{eq:basic_ineq2}
\big| D \varphi_{p,\tau} (\x) -  D \varphi_{p,\tau} (\y) \big| \leq C \big(\tau^2 + |\x|^2 + |\y|^2\big)^{\frac{p-2}{2}} | \x - \y|
\end{equation}
which hold for any $\x,\y \in \R^n$, $1 < p < \infty$ and $\tau \geq 0$. Indeed, \eqref{eq:basic_ineq} is a weaker version of the first inequality of \cite[(2.8)]{Diening08} for $\varphi(t) := \varphi_{p,\tau}(t \hat{x})$, with $\hat{x} \in \R^n$ being any vector of unit length. Similarly, \eqref{eq:basic_ineq2} follows with a bit of extra work from the second inequality of \cite[(2.8)]{Diening08}.

For all $\x \in \R^n$, $1 < p < \infty$ and $\tau \geq 0$, we have
\begin{align}
\label{eq:welldef}
\big | D \varphi_{p,\tau} (\x) \big|^q &\leq ( \tau^2 + |\x|^2)^{\frac{q(p-1)}{2}} 
= p \,\varphi_{p,\tau}(\x) \nonumber\\[1mm] 
&\leq 2^{p/2} \max\{\tau^2, | \x |^2 \}^{p/2} 
\leq 2^{p/2} (\tau^p + |\x|^p),
\end{align}
where $q := p/(p-1)$ is the conjugate index of $p$. For $2 \leq p < \infty$ and $\tau \geq 0$, we can deduce through the same logic that
\begin{equation}
\label{eq:isop}
\big | D \varphi_{p,\tau} (\x) \big| = ( \tau^2 + |\x|^2)^{\frac{p-2}{2}} |\x| \leq
 2^{\frac{p-2}{2}} (\tau^{p-2}|\x| + |\x|^{p-1}).
\end{equation}
On the other hand, for $1< p \leq 2$ and $\tau \geq 0$, it also holds that
\begin{align}
\label{eq:pienip}
| D \varphi_{p,\tau} (\x) \big|^q &= (\tau^2 + |\x|^2)^{\frac{q(p-2)+(q-2)}{2}} (\tau^2 + |\x|^2)^{\frac{2-q}{2}} |\x|^q \nonumber \\[1mm]
& \leq (\tau^2 + |\x|^2)^{\frac{q(p - 1) - 2}{2}} |\x|^2 = D \varphi_{p,\tau} (\x) \cdot \x,
\end{align}
where the penultimate step is a consequence of the fact that $2-q\leq0$. Finally, if $1< p \leq 2$,
\begin{align}
\label{eq:pienip2}
|\x|^p &= (\tau^2 + |\x|^2)^{\frac{p-2}{2}} (\tau^2 + |\x|^2)^{\frac{2-p}{2}} |\x|^p \nonumber \\[1mm]
&\leq 2^{\frac{2-p}{2}} (\tau^2 + |\x|^2)^{\frac{p-2}{2}} \big( \tau^{2-p} |\x|^{p} + |\x|^2 \big) \\[1mm]
&= 2^{\frac{2-p}{2}} \big( D\varphi_{p,\tau}(\x) \cdot \x + \tau^{2-p} (\tau^2 + |\x|^2)^{\frac{p-2}{2}} |\x|^p\big). \nonumber
\end{align}

\bibliographystyle{acm}
\bibliography{plapn-refs}

\end{document}